\numberwithin{equation}{section}
\newcommand{\R}{\mathbb{R}}
\newcommand{\N}{\mathbb{N}}
\newcommand{\E}{\mathbb{E}}
\renewcommand{\P}{\mathbb{P}}
\newcommand{\tr}{\operatorname{trace}}
\newcommand{\diff}{{\,\rm{d}}}
\newcommand{\dif}{{\rm{d}}}
\newcommand{\F}{\mathcal{F}}
\newtheorem{theorem}{Theorem}[section]
\newtheorem{lemma}[theorem]{Lemma}
\newtheorem{assumption}[theorem]{Assumption}
\begin{document}
	
\title{Strong and weak convergence orders of numerical methods for SDEs driven by time-changed L\'{e}vy noise\footnotemark[1]}
	
\footnotetext{\footnotemark[1] This work was supported by National Natural Science Foundation of China (No. 12201552) and Yunnan Fundamental Research Projects (No. 202301AU070010).}

\author{Ziheng Chen\footnotemark[2],\quad
		Jiao Liu\footnotemark[3],\quad
		Anxin Wu\footnotemark[4]}

\footnotetext{\footnotemark[2] School of Mathematics and Statistics, Yunnan University, Kunming, Yunnan, 650500, China. Email: czh@ynu.edu.cn.}
	
\footnotetext{\footnotemark[3] School of Mathematics and Statistics, Yunnan University, Kunming, Yunnan, 650500, China. Email: liujiao3@stu.ynu.edu.cn}
	
\footnotetext{\footnotemark[4] School of Mathematics and Statistics, Yunnan University, Kunming, Yunnan, 650500, China. Email: wuanxin@stu.ynu.edu.cn. Corresponding author.}
	
\date{}
\maketitle

\begin{abstract}
      {\rm\small This work investigates the strong and weak convergence orders of numerical methods for SDEs driven by time-changed L\'{e}vy noise under the globally Lipschitz conditions. Based on the duality theorem, we prove that the numerical approximation generated by the stochastic $\theta$ method with $\theta \in [0,1]$ and the simulation of inverse subordinator converges strongly with order $1/2$. Moreover, the numerical approximation combined with the Euler--Maruyama method and the estimate of inverse subordinator is shown to have the weak convergence order $1$ by means of the Kolmogorov backward partial integro differential equations. These theoretical results are finally confirmed by some numerical experiments.} 
      \\
      
      \textbf{AMS subject classification: }
      {\rm\small 60H10, 60H35, 65C30}\\

      \textbf{Key Words: }{\rm\small SDEs driven by time-changed L\'{e}vy noise, globally Lipschitz conditions, Stochastic $\theta$ method, strong convergence order, weak convergence order}
		
\end{abstract}
\date{}
\maketitle

\section{Introduction}
Stochastic differential equations (SDEs) driven by Gaussian noise and L\'{e}vy noise are powerful tools to model systems influenced by both continuous and jump-like random perturbations, having wide applications in finance, physics, information theory, statistical mechanics and many other scientific fields (see, e.g., \cite{applebaum2009levy, cont2004financial, mao2008stochastic, mccauley2013stochastic}). These classical SDEs usually assume that the underlying noise processes evolve according to the natural time scale with independent increments. Nevertheless, Gaussian noise and L\'{e}vy noise fall short of capturing the full complexity and richness of the observed real data for many practical scenarios, which do not evolve uniformly over time and often contain dependent increments. For example, market volatility may be more intense in certain periods and more stable in others \cite{ni2020study}. In order to capture irregular temporal dynamics, it is necessary and significant to consider SDEs driven by time-changed Gaussian noise and L\'{e}vy noise. As such equations not only offer greater flexibility in modeling complex systems but also exhibit a close connection to fractional Fokker--Planck equations, they have emerged as one of the most active and rapidly developing areas in stochastic analysis over the past few decades; see, e.g., \cite{magdziarz2009stochastic, nane2016stochastic, nane2017stability, nane2018path, nane2021timechanged, ni2020study} and references therein. Noting that the exact solutions of SDEs driven by time-changed Gaussian noise and L\'{e}vy noise are rarely available, numerical solutions thus become a powerful tool to understand the behavior of the underlying equations.

Recently, much progress has been made in the design and analysis of numerical approximations for SDEs driven by time-changed Gaussian noise. Base on the duality principle, the seminal work \cite{jum2016strong} establishes the strong and weak convergence orders of Euler--Maruyama method for a class of non-autonomous SDEs with time-space-dependent coefficient under the global Lipschitz condition and the linear growth condition. Taking into account that most SDEs arising from applications possess super-linearly or sub-linearly growing coefficients, \cite{deng2020semiimplicit} investigates the strong convergence order and mean square polynomial stability of the semi-implicit Euler--Maruyama method for SDEs whose drift coefficient can grow super-linearly 
and the diffusion coefficient obeys the global Lipschitz condition. For SDEs with super-linearly growing drift and diffusion coefficients, \cite{li2021truncated, liu2020truncated} prove the strong convergence order of the truncated Euler--Maruyama method. In order to achieve higher order accuracy, \cite{liu2023milstein} proposes a Milstein-type method for time-changed SDEs, whose spatial variables in the coefficients satisfy the super-linear growth condition and the temporal variables obey some H\"{o}lder’s continuity condition. Furthermore, When the considered time-changed SDEs satisfy the local Lipschitz condition, \cite{chen2025strong} studies the strong convergence and asymptotical mean square stability of stochastic theta method. As is seen from the aforementioned references, the strong convergence numerical methods for SDEs driven by time-changed Gaussian noise has been well studied. However, to the best of our knowledge, it remains unknown whether a numerical method applied to SDEs driven by time-changed L\'{e}vy noise converges strongly.

This work aims to establish the strong and weak convergence of numerical 
approximations for the following SDEs driven by time-changed L\'{e}vy noise
\begin{align}\label{eq:TCSDE}
	  \diff{X(t)}
	  =&~
	  f(E(t),X(t-))\diff{E(t)}
	  +
	  g(E(t),X(t-))\diff{W(E(t))} \notag
	  \\&~+ 
      \int_{|z|<c} h(E(t),X(t-),z)  
	  \tilde{N}(\dif{z},\dif{E(t)}), 
	  \quad t \in [0,T]
\end{align}
with $X(0) = x_{0} \in \R^{d}$, where $c > 0$, $X(t-) := \lim_{s \uparrow t}X(s), t > 0$, and the coefficients $f \colon \R^{+} \times \R^{d} \to \R^{d}, g  \colon \R^{+} \times \R^{d} \to \R^{d \times m}$ and $h \colon  \R^{+} \times \R^{d} \times \R \to \R^{d}$ are Borel measurable functions. Besides, $\{W(t)\}_{t \geq 0}$ is an $m$-dimensional standard Brownian motion on the complete filtered probability space $(\Omega,\F,\P,\{\F_{t}\}_{t \geq 0})$ with the filtration $\{\F_{t}\}_{t \geq 0}$ satisfying the usual conditions. Let $\tilde{N}(\dif{z},\dif{t}) := N(\dif{z},\dif{t}) - \nu(\dif{z})\dif{t}$ be the compensator of $\{\F_{t}\}_{t \geq 0}$-adapted Poisson random measure $N(\dif{z},\dif{t})$ defined on $(\Omega \times \R^{+} \times (\R \backslash \{0\}),\F \times \mathcal{B}(\R^{+}) \times \mathcal{B}((\R \backslash \{0\})))$, where $\nu$ is a L\'{e}vy measure satisfying $\int_{\R \backslash \{0\}} \big(|y|^{2} \wedge 1\big) \nu(\dif{y}) < +\infty$. Moreover, $\{E(t)\}_{t \geq 0}$ is a subordinator and used to determine the random number of ``time steps" that occur within the subordinated process for a given unit of chronological time. Under the global Lipschitz conditions specified in Assumption \ref{As:(2.1)}, \cite[Lemma 4.1]{kobayashi2011stochastic} ensures that \eqref{eq:TCSDE} admits a unique solution $\{X(t)\}_{t \geq 0}$; see \cite{dinunno2024stochastic, meng2024ultimate, nane2017stability, nane2018path, ni2020study} for its more properties and applications.

In order to numerically approximate \eqref{eq:TCSDE}, the duality principle and the idea of approximations of inverse subordinator stimulate us to consider the stochastic $\theta$ method with $\theta \in [0,1]$ for the corresponding original SDEs of \eqref{eq:TCSDE} as follows  
\begin{align}\label{eq:Stmethod}
	  Y_{n+1}
	  =&~
	  Y_{n} + \theta f(t_{n+1},Y_{n+1})\Delta
	  +
	  (1-\theta) f(t_{n},Y_{n})\Delta
      +
      g(t_{n},Y_{n})\Delta W_{n}  \notag
      \\&~+
      \int_{t_{n}}^{t_{n+1}}\int_{|z|<c}
      h(t_n,Y_n,z) \,\tilde{N}(\dif{z},\dif{s}),
      \quad n = 0,1,2,\cdots
\end{align}
with $Y_0=Y(0)$, where $t_{n} := n\Delta$ and $\Delta W_{n} := W(t_{n+1}) - W(t_n)$ for $\Delta \in (0,1)$ and $n \in \N$. Different from the strong convergence analysis for stochastic $\theta$ method and its invariants on finite time interval, we establish the strong convergence order of stochastic $\theta$ method on the infinite time interval $\R^{+}$, which together with the approximation of the inverse subordinator gives the strong convergence order of numerical approximation for \eqref{eq:TCSDE}. For the case $\theta = 0$, we investigate the weak convergence order of Euler--Maruyama method on $\R^{+}$ with the help of the Kolmogorov backward partial integro differential equations and the higher order moment estimates of exact and numerical solutions. As a consequence of the approximation of the inverse subordinator, the weak  convergence order of numerical approximation for \eqref{eq:TCSDE} is derived.

The remainder of this paper is organized as follows. Section \ref{sect:asspre} provides some basic assumptions and preliminary results for \eqref{eq:TCSDE}. Then the strong convergence order of stochastic $\theta$ method and the weak convergence order of Euler--Maruyama method are derived in Section \ref{sect:strongorder} and Section \ref{sect:weakorder}, respectively. Some numerical
experiments are finally given in Section \ref{sect:numexp} to illustrate these theoretical results.

\section{Assumptions and preliminaries}\label{sect:asspre}
Throughout this paper, we use the following notations unless otherwise specified. For any $a,b \in \R$, we denote $a \wedge b := \min\{a, b\}$. Let $|\cdot|$ be the Euclidean norm in $\R^d$ and $\langle x,y \rangle $ denote the inner product of vectors $x,y \in \R^d$. By $A^{\top}$ we denote the transpose of a vector in $\R^{d}$ or matrix in $\R^{d \times m}$. Following the same notation as the vector norm, $|A| := \sqrt{\tr{(A^{\top}A)}}$ denotes the trace norm of a matrix $A \in \R^{d \times m}$. For simplicity, the letter $C$ denotes a generic positive constant that is independent of $\Delta \in (0,1), t \geq 0$ and varies for each appearance.

%
%
Let $\{D(t)\}_{t \geq 0}$ be a 
subordinator starting from $0$ with Laplace transform
\begin{equation}\label{eq:2.1}
	  \E\big[e^{-\lambda D(t)}\big]
	  =
	  e^{-t\phi(\lambda)},
	  \quad \lambda > 0, \ t \geq 0,
\end{equation}
where Laplace exponent $\phi(\lambda) := \int_{0}^{\infty}(1-e^{-\lambda x}) \,\nu(\dif{x})$. Besides, we always assume that $\{D(t)\}_{t \geq 0}$, $\{W(t)\}_{t \geq 0}$ and $\{N(t,\cdot)\}_{t \geq 0}$ are mutually independent. Define the inverse of $\{D(t)\}_{t \geq 0}$ as follows
$$E(t) := \inf\big\{s \geq 0 : D(s) > t \big\}, \quad t \geq 0,$$
Then $t \mapsto E(t)$ is continuous and nondecreasing almost surely. 
Moreover, \cite[Lemma 2.3]{deng2020semiimplicit} indicates that for any $\lambda > 0$ and $t \geq 0$, there exists $C > 0$, depending on $\lambda$ but independent of $t$, such that 
\begin{equation}\label{eq:estimateEelambdaEt}
      \E\big[e^{\lambda E(t)}\big] \leq Ce^{Ct}.  
\end{equation}

To provide efficient numerical approximation for \eqref{eq:TCSDE}, we make the following assumptions.
\begin{assumption}\label{As:(2.1)}
      There exists $C^{*} > 0$ such that for any $t \geq 0$ and $x,y \in \R^{d}$, 
	  \begin{equation*}
			|f(t,x)-f(t,y)|^{2} + |g(t,x)-g(t,y)|^{2}
			+
			\int_{|z|<c} |h(t,x,z) - h(t,y,z)|^{2} \,\nu(\dif{z})
			\leq
			C^{*}|x-y|^{2}.
	  \end{equation*}
      Besides, there exists $C > 0$ such that for any $t \geq 0$ and $x \in \R^{d}$,
      \begin{equation*}
			|f(t,x)|^{2} + |g(t,x)|^{2}
			+
			\int_{|z|<c} |h(t,x,z)|^{2} \,\nu(\dif{z})
			\leq
			C(1+|x|^{2}).
	  \end{equation*}
\end{assumption}

Let $\mathcal{G}_{t} := \F_{E{(t)}}$ for $t \geq 0$ and denote $\mathcal{L}(\mathcal{G}_{t})$ the class of left continuous with right limits and $\{\mathcal{G}_{t}\}_{t \geq 0}$-adapted processes. 
\begin{assumption}\label{As:(2.3)}
      If $\{X(t)\}_{t \geq 0}$ is a right continuous with left limits and $\mathcal{G}_t$-adapted process, then
      $$f(E(t),X(t)),g(E(t),X(t)),h(E(t),X(t),z)
        \in \mathcal{L}(\mathcal{G}_t).$$
\end{assumption}

It follows from Assumptions \ref{As:(2.1)}, \ref{As:(2.3)} and 
\cite[Lemma 4.1]{kobayashi2011stochastic} that \eqref{eq:TCSDE} admits a unique solution $\{X(t)\}_{t \in [0,T]}$, described by
\begin{align}\label{eq:TCSDEintegral}
	  X(t)
	  =&~
	  x_{0} + \int_{0}^{t} f(E(s),X(s-)) \diff{E(s)}
	  +
	  \int_{0}^{t} g(E(s),X(s-)) \diff{W(E(s))} \notag
	  \\&~+ 
	  \int_{0}^{t} \int_{|z|<c} 
	  h(E(s),X(s-),z) \,\tilde{N}(\dif{z},\dif{E(s)}), 
      \quad t \in [0,T],
\end{align}
which is right continuous with left limits, $\{\mathcal{G}_{t}\}_{t \geq 0}$-adapted. Moreover, for the corresponding original SDEs of \eqref{eq:TCSDE}
\begin{equation}\label{eq:SDE}
	  \diff{Y(t)}
	  =
	  f(t,Y(t-))\diff{t} + g(t, Y(t-))\diff{W(t)}
	  +
	  \int_{|z|<c}h(t,Y(t-),z)
	  \,\tilde{N}(\dif{z},\dif{t}),
	  \quad t > 0 
\end{equation}
with $Y(0) = x_{0}$, Assumption \ref{As:(2.1)} and \cite[Theorem 6.2.3]{applebaum2009levy} imply that \eqref{eq:SDE} possesses a unique solution $\{Y(t)\}_{t \geq 0}$, which is right continuous with left limits, $\{\F_t\}_{t \geq 0}$-adapted and given by
\begin{align}\label{eq:SDEintegral}
      Y(t)
	  =&~
	  x_{0} + \int_{0}^{t} f(s,Y(s-)) \diff{s}
	  +
      \int_{0}^{t} g(s, Y(s-)) \diff{W(s)} \notag
	  \\&~+
	  \int_{0}^{t} \int_{|z|<c} h(s,Y(s-),z)
      \,\tilde{N}(\dif{z},\dif{s}), \quad t \geq 0. 
\end{align} 
The following duality principle (see, e.g., \cite[Theorem 4.2]{kobayashi2011stochastic} and \cite[Section 3.3]{ni2020study}) provides a deep connection between the solution of the time-changed SDEs \eqref{eq:TCSDE} and that of the corresponding original SDEs \eqref{eq:SDE}, which will be frequently applied in the convergence analysis.

\begin{lemma}\label{le:2.5}
      Suppose Assumptions \ref{As:(2.1)} and \ref{As:(2.3)} hold. then we have the following conclusions:
	  \begin{itemize}
            \item [(1)] If $\{Y(t)\}_{t \geq 0}$ satisfies \eqref{eq:SDE}, then $\{Y(E(t))\}_{t \geq 0}$ satisfies \eqref{eq:TCSDE}.
				
            \item [(2)] If $\{X(t)\}_{t \geq 0}$ satisfies \eqref{eq:TCSDE}, then $\{X(D(t))\}_{t \geq 0}$ satisfies \eqref{eq:SDE}.
      \end{itemize}
\end{lemma}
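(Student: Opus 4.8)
The plan is to derive both statements from the change-of-variable formula for time-changed stochastic integrals established in \cite{kobayashi2011stochastic}, which identifies an integral taken up to the random time $E(t)$ against $\dif s$, $\dif W(s)$ or $\tilde N(\dif z,\dif s)$ with the corresponding integral taken up to $t$ against $\dif E(s)$, $\dif W(E(s))$ or $\tilde N(\dif z,\dif E(s))$. The structural fact underlying this correspondence is that, since $E$ is the inverse of the subordinator $D$, each $E(t)$ is an a.s.\ finite $\{\mathcal{G}_t\}$-stopping time (because $\{E(t)\le s\}=\{D(s)\ge t\}\in\F_s$), and the time-changed filtration $\mathcal{G}_t=\F_{E(t)}$ renders $W(E(\cdot))$ a $\{\mathcal{G}_t\}$-martingale and $N(\dif z,\dif E(\cdot))$ a Poisson-type random measure with compensator $\nu(\dif z)\,\dif E(\cdot)$. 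The adaptedness requirements in Assumption \ref{As:(2.3)} are precisely what guarantee that the integrands remain admissible after the substitution.

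For part (1), I would evaluate the integral equation \eqref{eq:SDEintegral} for $Y$ at the random time $E(t)$ and then apply the change-of-variable formula term by term: the Lebesgue drift via the deterministic substitution $s=E(r)$, and the two stochastic integrals via the time-change theorem of \cite{kobayashi2011stochastic}. Using the continuity and monotonicity of $E$ to rewrite $Y(E(r)-)$ as $X(r-)$ with $X(t):=Y(E(t))$, the transformed equation is exactly \eqref{eq:TCSDEintegral}, so $X$ solves \eqref{eq:TCSDE}. For part (2), I would run the same correspondence in reverse: starting from \eqref{eq:TCSDEintegral} for $X$, I would evaluate at the subordinator time $D(t)$ and invoke the identity $E(D(t))=t$ (valid a.s.\ since $D$ is strictly increasing) to collapse each $\dif E(D(\cdot))$ factor back to $\dif(\cdot)$, thereby recovering \eqref{eq:SDEintegral} for $Z(t):=X(D(t))$.

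I expect the main obstacle to be the rigorous justification of the time-change formula for the compensated jump integral, rather than the Lebesgue drift term. One must verify that the time-changed random measure $N(\dif z,\dif E(t))$ carries the correct $\{\mathcal{G}_t\}$-predictable compensator $\nu(\dif z)\,\dif E(t)$ and that $h(E(t),X(t-),z)$ lies in $\mathcal{L}(\mathcal{G}_t)$, so that the stochastic integral is well defined on both sides of the identity. This is exactly where the mutual independence of $D$, $W$ and $N$ together with the class conditions of Assumption \ref{As:(2.3)} enter, and it is the content supplied by \cite[Theorem 4.2]{kobayashi2011stochastic}; the remaining effort is bookkeeping to align the drift, diffusion and jump terms across the substitution.
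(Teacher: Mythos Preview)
Your proposal is correct and aligns with the paper's treatment: the paper does not give an independent proof of this lemma but simply records it as the duality principle, citing \cite[Theorem~4.2]{kobayashi2011stochastic} and \cite[Section~3.3]{ni2020study}. Your sketch is precisely an outline of the argument behind that cited result, so there is nothing to compare beyond noting that you have unpacked what the paper leaves as a black-box citation.
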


For convenience, we will use $X(t),Y(t)$ to denote $X(t-),Y(t-)$ in \eqref{eq:TCSDEintegral} and \eqref{eq:SDEintegral}, respectively; see \cite[Remark 2.1]{dareiotis2016tamed}. The following two lemmas give the mean square moment estimate and the H\"{o}lder continuity for \eqref{eq:SDEintegral} on the infinite time interval $\R^{+}$.

\begin{lemma}\label{le:Ymoment}
      Suppose that Assumption \ref{As:(2.1)} holds. Then there exists $C > 0$, independent of $t$, such that 
      $$\E\big[|Y(t)|^{2}\big] \leq Ce^{Ct}, \quad t \geq 0.$$
\end{lemma}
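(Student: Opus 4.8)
The plan is to apply It\^o's formula to $|Y(t)|^2$, take expectations, and close the estimate with Gronwall's inequality; the only real subtlety is the a priori integrability required to discard the martingale terms, which I would handle by localization. Since the solution of \eqref{eq:SDEintegral} is a jump diffusion, It\^o's formula for semimartingales with jumps gives
\begin{align*}
|Y(t)|^2 &= |x_0|^2 + \int_0^t \Big(2\langle Y(s),f(s,Y(s))\rangle + |g(s,Y(s))|^2 + \int_{|z|<c}|h(s,Y(s),z)|^2\,\nu(\dif{z})\Big)\diff{s} \\
&\quad + \int_0^t 2\langle Y(s),g(s,Y(s))\,\diff{W(s)}\rangle + \int_0^t\int_{|z|<c}\big(|Y(s)+h(s,Y(s),z)|^2 - |Y(s)|^2\big)\,\tilde{N}(\dif{z},\dif{s}),
\end{align*}
where the compensator produced by the jump term has combined with the drift so that the $\nu$-integral of $|h|^2$ appears in the bounded-variation part, after using the identity $|Y+h|^2 - |Y|^2 - 2\langle Y,h\rangle = |h|^2$.

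First I would introduce the stopping times $\tau_R := \inf\{t\geq 0 : |Y(t)|\geq R\}$ for $R>0$ and evaluate the It\^o formula at $t\wedge\tau_R$ in place of $t$. On $[0,t\wedge\tau_R]$ the integrands of the stochastic integrals are bounded, so both the Brownian integral and the compensated Poisson integral are genuine martingales and vanish in expectation. Taking expectations then yields
\begin{align*}
\E\big[|Y(t\wedge\tau_R)|^2\big] = |x_0|^2 + \E\Big[\int_0^{t\wedge\tau_R}\Big(2\langle Y(s),f(s,Y(s))\rangle + |g(s,Y(s))|^2 + \int_{|z|<c}|h(s,Y(s),z)|^2\,\nu(\dif{z})\Big)\diff{s}\Big].
\end{align*}
Combining the elementary bound $2\langle Y,f\rangle \leq |Y|^2+|f|^2$ with the linear growth estimate in Assumption \ref{As:(2.1)}, the integrand is dominated by $C(1+|Y(s)|^2)$, whence
\begin{align*}
\E\big[|Y(t\wedge\tau_R)|^2\big] \leq |x_0|^2 + Ct + C\int_0^t \E\big[|Y(s\wedge\tau_R)|^2\big]\diff{s}.
\end{align*}

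Gronwall's inequality applied to $s\mapsto\E[|Y(s\wedge\tau_R)|^2]$ gives $\E[|Y(t\wedge\tau_R)|^2]\leq (|x_0|^2+Ct)e^{Ct}$ with a constant $C$ independent of $R$; letting $R\to\infty$ and using $\tau_R\to\infty$ almost surely together with Fatou's lemma produces $\E[|Y(t)|^2]\leq(|x_0|^2+Ct)e^{Ct}$, and absorbing the polynomial prefactor into the exponential yields the stated bound $\E[|Y(t)|^2]\leq Ce^{Ct}$. The hard part will be this integrability issue: prior to localization one cannot even assert that $\E[|Y(t)|^2]$ is finite, so the stopping-time truncation and the uniformity of the Gronwall constant in both $R$ and the time horizon are what make the estimate valid on all of $\R^{+}$. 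A secondary technical point is verifying that the quadratic-variation and compensator contributions combine exactly into the $\nu$-integral of $|h|^2$; this is where the L\'evy structure and the assumption $\int(|y|^2\wedge 1)\,\nu(\dif{y})<\infty$ enter, guaranteeing that the compensated jump integral is well defined.
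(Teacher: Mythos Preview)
Your proof is correct and follows essentially the same approach as the paper: apply It\^o's formula to $|Y(t)|^2$, use $2\langle Y,f\rangle \le |Y|^2+|f|^2$ together with the linear growth condition in Assumption~\ref{As:(2.1)}, and close with Gronwall. The only difference is that you include the localization argument with stopping times $\tau_R$ to justify dropping the martingale terms, which the paper omits and simply takes for granted.
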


\begin{proof}
      According to the It\^{o} formula (see, e.g., \cite[Lemma 2.6]{chen2019meansquare}), we have
      \begin{align*}
			|Y(t)|^{2}
			=&~
			|x_{0}|^2
			+
			2\int_{0}^{t} \big\langle Y(s),
			f(s,Y(s)) \big\rangle \diff{s}
			+
			2\int_{0}^{t} \big\langle Y(s),
			g(s,Y(s))\diff{W(s)} \big\rangle 
			\\&~+
			\int_{0}^{t} |g(s,Y(s))|^{2} \diff{s} 
			+
			\int_{0}^{t} \int_{|z|<c}
			\big(|Y(s) + h(s,Y(s),z)|^{2} - |Y(s)|^{2}\big)
			\,\tilde{N}(\dif{z},\dif{s})     
			\\&~+
			\int_{0}^{t}\int_{|z|<c}
			\big(|Y(s)+h(s,Y(s),z)|^{2} - |Y(s)|^{2} 
			- 2\big\langle Y(s),h(s,Y(s),z) \big\rangle\big) 
			\,\nu(\dif{z})\dif{s}.
      \end{align*}
	  Taking expectations and using 
	  Assumption \ref{As:(2.1)} lead to
	  \begin{align*}
			\E[|Y(t)|^{2}] 
			\leq&~
			|x_{0}|^{2}
			+
			\E\bigg[\int_0^t|Y(s)|^2\diff{s}\bigg]
			+
			\E\bigg[\int_0^t\int_{|z|<c}
			|h(s,Y(s),z)|^{2} \,\nu(\dif{z})\dif{s}\bigg]
			\\&~+
			\E\bigg[\int_0^t|f(s,Y(s)|^2\diff{s}\bigg]
			+
			\E\bigg[\int_0^t|g(s,Y(s)|^2\diff{s}\bigg]
			\\\leq&~ 
			|x_{0}|^{2} + Ct + C\int_{0}^{t} \E[|Y(s)|^{2}] \diff{s}.
      \end{align*}
      Then the Gronwall inequality (see, e.g., \cite[Theorem 1]{dragomir2003some}) completes the proof.
\end{proof}

\begin{lemma}\label{le:Yholdercontinuity}
      Suppose that Assumption \ref{As:(2.1)} holds. Then for any $t,s \geq 0$ with $0 \leq t-s < 1$, there exists $C > 0$, independent of $t$ and $s$, such that $$\E[|Y(t)-Y(s)|^{2}] \leq  (t-s)Ce^{Ct}.$$
\end{lemma}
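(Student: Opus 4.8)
The plan is to read the increment $Y(t)-Y(s)$ directly off the integral representation \eqref{eq:SDEintegral}, bound its second moment by three times the sum of the squared second moments of the drift, Brownian, and jump contributions, and then estimate each piece using the relevant isometry together with the linear growth bound from Assumption \ref{As:(2.1)} and the moment estimate from Lemma \ref{le:Ymoment}. Subtracting the representations of $Y(t)$ and $Y(s)$ yields
\begin{align*}
      Y(t)-Y(s)
      =&~
      \int_{s}^{t} f(r,Y(r)) \diff{r}
      +
      \int_{s}^{t} g(r,Y(r)) \diff{W(r)}
      \\&~+
      \int_{s}^{t} \int_{|z|<c} h(r,Y(r),z) \,\tilde{N}(\dif{z},\dif{r}),
\end{align*}
so that, by the elementary inequality $|a+b+c|^{2} \leq 3(|a|^{2}+|b|^{2}+|c|^{2})$, the quantity $\E[|Y(t)-Y(s)|^{2}]$ is dominated by three separate terms that I would handle in turn.

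For the drift term I would apply the Cauchy--Schwarz inequality in the variable $r$ to pull out a factor $(t-s)$, giving a bound of the form $(t-s)\,\E\big[\int_{s}^{t} |f(r,Y(r))|^{2}\diff{r}\big]$; the linear growth condition then replaces $|f(r,Y(r))|^{2}$ by $C(1+|Y(r)|^{2})$, and Lemma \ref{le:Ymoment} bounds $\E[|Y(r)|^{2}]$ by $Ce^{Cr} \leq Ce^{Ct}$ uniformly on $[s,t]$, producing $(t-s)^{2}Ce^{Ct}$. For the Brownian term I would invoke the It\^{o} isometry to obtain $\E\big[\int_{s}^{t}|g(r,Y(r))|^{2}\diff{r}\big]$, and for the jump term the corresponding It\^{o} isometry for the compensated Poisson random measure gives $\E\big[\int_{s}^{t}\int_{|z|<c} |h(r,Y(r),z)|^{2}\,\nu(\dif{z})\diff{r}\big]$; in both cases the linear growth bound and Lemma \ref{le:Ymoment} yield a contribution of order $(t-s)Ce^{Ct}$.

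Collecting the three estimates and using the hypothesis $t-s < 1$ to absorb $(t-s)^{2} \leq (t-s)$ in the drift term then gives $\E[|Y(t)-Y(s)|^{2}] \leq (t-s)Ce^{Ct}$, as claimed. This argument is essentially routine, so I do not expect a serious obstacle; the only point requiring slight care is keeping the constant uniform in $t$, which works precisely because $\E[|Y(r)|^{2}]$ is bounded by the increasing function $Ce^{Cr}$, so its supremum over $r \in [s,t]$ is controlled by $Ce^{Ct}$, and because the $(t-s)^{2}$ arising from the drift can be downgraded to $(t-s)$ under the standing assumption $0 \leq t-s < 1$.
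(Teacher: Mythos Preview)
Your proposal is correct and follows essentially the same approach as the paper: the paper likewise writes $Y(t)-Y(s)$ from the integral representation, applies the inequality $|a+b+c|^{2}\leq 3(|a|^{2}+|b|^{2}+|c|^{2})$, handles the drift via H\"older (your Cauchy--Schwarz step) and the stochastic integrals via the It\^{o} isometries, invokes the linear growth condition and Lemma~\ref{le:Ymoment}, and finally uses $0\leq t-s<1$ to collapse the drift contribution to order $(t-s)$. There is no substantive difference.
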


\begin{proof}
	  Due to the It\^{o} isometry, the H\"{o}lder inequality and
	  $$Y(t)-Y(s) = \int_s^t f(r,Y(r))\diff{r}
		+
		\int_s^tg(r,Y(r))\diff{W(r)}
		+
		\int_s^t\int_{|z|<c}h(r,Y(r),z)
		\,\tilde{N}(\dif{z},\dif{r}),$$
      one has
	  \begin{align*}
			\E[|Y(t)-Y(s)|^{2}]
			\leq&~
			3\E\bigg[\Big|\int_{s}^{t} f(r,Y(r)) \diff{r}\Big|^{2}\bigg]
			+
			3\E\bigg[\int_s^t|g(r,Y(r))|^2\dif{r}\bigg]
			\\&~
			+
			3\E\bigg[\int_s^t\int_{|z|<c}
            |h(r,Y(r),z)|^{2} \,\nu(\dif{z})\dif{r}\bigg]
			\\\leq&~
			3(t-s)\int_s^t \E[|f(r,Y(r))|^2]\diff{r}
			+
			3\int_s^t \E[|g(r,Y(r))|^2]\diff{r}
			\\&~
			+
			3\int_s^t\E\bigg[\int_{|z|< c}
			|h(r,Y(r),z)|^{2} \nu(\dif{z})\bigg]\diff{r}.
      \end{align*}
      Utilizing $0 \leq t-s < 1$ and Assumption \ref{As:(2.1)}, 
      it holds that
      \begin{align*}
			\E[|Y(t)-Y(s)|^2]
			\leq&~
			C\int_s^t \big(1 + \E[|Y(r)|^2]\big) \diff{r}.
      \end{align*}
      Lemma \ref{le:Ymoment} further implies the required result.
\end{proof}

\section{Strong convergence order of stochastic theta method}
\label{sect:strongorder}
This section aims to present the strong convergence order of numerical approximations for \eqref{eq:TCSDE}. To this end, we discretize \eqref{eq:SDE} and $\{E(t)\}_{t \geq 0}$ by \eqref{eq:Stmethod} and \eqref{eq:3.8}, respectively. Then the composition of the numerical solution of \eqref{eq:SDE} and the discretized inverse subordinator is used to approximate the solution to the time-changed SDEs \eqref{eq:TCSDE}. The corresponding numerical approximation involves two types of errors: one is generated by the application of stochastic theta method and the other ascribed to simulation of the inverse subordinator.
Before analyzing these errors separately, 
%
%
%
%
%
%
let us first give the well-posedness of \eqref{eq:Stmethod} for $\theta \in (0,1]$.


\begin{lemma}
      Suppose that Assumption \ref{As:(2.1)} holds and let $\theta \sqrt{C^{*}} \Delta < 1$. Then the nonlinear equation given by stochastic theta method \eqref{eq:Stmethod} has a unique solution with probability one.
\end{lemma}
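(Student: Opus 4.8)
The plan is to observe that, once the index $n$ is fixed and all the data available at time $t_{n}$ are frozen, equation \eqref{eq:Stmethod} is a deterministic fixed-point equation for the single unknown $Y_{n+1} \in \R^{d}$. Collecting every term that does not involve $Y_{n+1}$ into
$$b := Y_{n} + (1-\theta)f(t_{n},Y_{n})\Delta + g(t_{n},Y_{n})\Delta W_{n} + \int_{t_{n}}^{t_{n+1}}\int_{|z|<c} h(t_{n},Y_{n},z)\,\tilde{N}(\dif{z},\dif{s}),$$
I would rewrite \eqref{eq:Stmethod} as $Y_{n+1} = F(Y_{n+1})$, where $F(y) := \theta\Delta f(t_{n+1},y) + b$. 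Since $Y_{n}$, $\Delta W_{n}$ and the compensated Poisson integral are finite almost surely, the vector $b$ is a well-defined element of $\R^{d}$ on an event of full probability, so it suffices to produce a unique fixed point of $F$ on that event.

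The key step is to verify that $F$ is a contraction. From the first inequality in Assumption \ref{As:(2.1)}, discarding the nonnegative $g$- and $h$-contributions yields the Lipschitz bound $|f(t,x)-f(t,y)| \leq \sqrt{C^{*}}\,|x-y|$ for all $t \geq 0$ and $x,y \in \R^{d}$. Consequently, for any $y_{1},y_{2} \in \R^{d}$,
$$|F(y_{1})-F(y_{2})| = \theta\Delta\,|f(t_{n+1},y_{1})-f(t_{n+1},y_{2})| \leq \theta\sqrt{C^{*}}\Delta\,|y_{1}-y_{2}|.$$
Under the hypothesis $\theta\sqrt{C^{*}}\Delta < 1$, the map $F$ is therefore a contraction on the complete metric space $(\R^{d},|\cdot|)$ with modulus strictly less than one, and the Banach fixed-point theorem then delivers the unique $Y_{n+1}$ solving $Y_{n+1} = F(Y_{n+1})$, i.e. \eqref{eq:Stmethod}, on the full-probability event. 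For $\theta = 0$ the scheme is explicit and the claim is trivial, so the argument in fact covers every $\theta \in [0,1]$ satisfying the stated smallness condition.

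I do not anticipate a genuine obstacle here, as this is a routine contraction argument. The only point that deserves a moment of care is the interplay between the pathwise and probabilistic statements: the contraction modulus $\theta\sqrt{C^{*}}\Delta$ is a deterministic number independent of the realization, so the Banach argument applies pathwise, and the exceptional null set enters solely through the requirement that the stochastic integral defining $b$ be finite. One should also note that the fixed point $Y_{n+1}$ inherits $\mathcal{G}$-measurability (equivalently, $\F$-measurability in the original-SDE picture) from the uniform limit of the Picard iterates generated by $F$, which keeps the scheme adapted, so that the solution of \eqref{eq:Stmethod} is well defined as a stochastic process and not merely realization by realization.
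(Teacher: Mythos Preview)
Your argument is correct, and the contraction reasoning is clean. The paper proceeds slightly differently: it writes the step as a root-finding problem $F(x)=0$ with $F(x):=x-\theta\Delta f(t_{n+1},x)-b$ and verifies the strong monotonicity estimate
\[
\big\langle x-y,\,F(x)-F(y)\big\rangle \ge \big(1-\theta\sqrt{C^{*}}\Delta\big)|x-y|^{2},
\]
then appeals to a monotone-operator existence/uniqueness result (Stuart--Humphries, Theorem~C.2). Your Banach fixed-point route uses exactly the same Lipschitz bound on $f$ and is arguably more elementary; the monotonicity formulation in the paper, on the other hand, is the one that would survive if $f$ were only one-sided Lipschitz, which is the regime where implicit schemes are typically preferred. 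Under the global Lipschitz hypothesis actually in force here, the two arguments are equivalent in strength.
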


\begin{proof}
      For every step of the stochastic theta method \eqref{eq:Stmethod} with $Y_{n}$ given, to find $Y_{n+1}$ according to \eqref{eq:Stmethod} is equivalent to solve an implicit problem $F(x) = 0$, where
      \begin{align*}
			F(x)
			:=&~
			x - Y_n - \theta f(t_{n+1},x)\Delta
			-
			(1-\theta) f(t_n,Y_n)\Delta
			\\&~
			- g(t_n,Y_n)\Delta W_n 
			- \int_{t_{n}}^{t_{n+1}}\int_{|z|<c}
			h(t_n,Y_n,z) \,\tilde{N}(\dif{z},\dif{s}),
			\quad x \in \R.
      \end{align*}      
	  From Assumption \ref{As:(2.1)}, it follows that
	  \begin{align*}   
			\big\langle x-y,F(x)-F(y) \big\rangle
			=&~
			\big|x-y\big|^{2} - \theta \big\langle x-y,
			f(t_{n+1},x) - f(t_{n+1},y) \big\rangle\Delta
			\\\geq&~
			\big(1 - \theta\sqrt{C^{*}}\Delta\big)
			\big|x-y\big|^{2}.
      \end{align*}
      Then we complete the proof due to $\theta \sqrt{C^{*}} \Delta < 1$ and 
	  \cite[Theorem C.2]{stuart1996dynamical}.
\end{proof}

Define the piecewise continuous numerical solution by
\begin{equation}\label{eq:3.2}
	  Y_{\Delta}(t) = Y_{n}, 
      \quad t \in [t_{n},t_{n+1}), n = 0,1,2,\cdots.
\end{equation}
To obtain the error estimate between $\{Y(t)\}_{t \geq 0}$ and $\{Y_{\Delta}(t)\}_{t \geq 0}$, we assume the following H\"{o}lder continuity of coefficients with respect to the time variables.

\begin{assumption}\label{As:fghholdercont}
      There exist $C > 0$ and $\gamma > 0$ such that for any $t,s \geq 0$ and $x \in \R^{d}$, 
      \begin{align*}
			|f(t,x) &- f(s,x)|^{2} 
			+
			|g(t,x) - g(s,x)|^{2}
			\\&+
			\int_{|z|<c}|h(t,x,z) 
			- h(s,x,z)|^{2} \,\nu(\dif{z})
			\leq
			C(1 + |x|^2)|t-s|^{\gamma}.
      \end{align*}   
\end{assumption}

\begin{lemma}\label{Th:3.4}
      Suppose that Assumptions \ref{As:(2.1)} and \ref{As:fghholdercont} hold and let $\theta (1 + \sqrt{C^{*}}) \Delta < \rho$ with $\rho \in (0,\frac{1}{2})$. Then there exists $C > 0$ independent of $t$ and $\Delta$ such that
      $$\E\big[|Y(t)-Y_\Delta(t)|^{2}\big]
		\leq \Delta^{\gamma \wedge 1} Ce^{Ct}, 
		\quad t \geq 0.$$
\end{lemma}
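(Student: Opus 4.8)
The plan is to bound the global strong error $\E[|Y(t)-Y_\Delta(t)|^2]$ on the infinite interval $\R^+$ by setting up a recursion on the step errors $e_n := Y(t_n) - Y_n$ and then controlling the growth of the recursion constants so that the final bound is of the form $\Delta^{\gamma\wedge 1} C e^{Ct}$. First I would write down the exact solution increment over $[t_n, t_{n+1}]$ from \eqref{eq:SDEintegral} and subtract the scheme \eqref{eq:Stmethod}, isolating the implicit term $\theta f(t_{n+1}, Y_{n+1})\Delta$ on the left. The natural object to estimate is $e_{n+1} - \theta\big(f(t_{n+1},Y(t_{n+1})) - f(t_{n+1},Y_{n+1})\big)\Delta$; by the one-sided / monotonicity-type manipulation already used in the well-posedness lemma, taking $\langle \cdot,\cdot\rangle$ with $e_{n+1}$ and using the Lipschitz Assumption \ref{As:(2.1)} together with $\theta(1+\sqrt{C^*})\Delta < \rho < 1/2$ lets me absorb the implicit contribution and produce a lower bound $(1 - \theta\sqrt{C^*}\Delta)|e_{n+1}|^2$ on the left, which is bounded below by a positive multiple of $|e_{n+1}|^2$.

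The right-hand side splits into several families of terms: the \emph{drift defect} $\int_{t_n}^{t_{n+1}} \big(f(s,Y(s)) - \theta f(t_{n+1},Y_n) - (1-\theta)f(t_n,Y_n)\big)\diff s$, the \emph{diffusion defect} $\int_{t_n}^{t_{n+1}}\big(g(s,Y(s)) - g(t_n,Y_n)\big)\diff W(s)$, and the analogous \emph{jump defect} against $\tilde N(\dif z,\dif s)$. Each defect I would further decompose into a time-regularity piece (frozen spatial argument, controlled by the Hölder-in-time Assumption \ref{As:fghholdercont}, yielding $\Delta^\gamma$), a path-regularity piece $Y(s)-Y(t_n)$ (controlled by the Hölder continuity Lemma \ref{le:Yholdercontinuity}, yielding an extra $\Delta$), and an error-propagation piece $Y(t_n)-Y_n = e_n$ (controlled by the spatial Lipschitz bound). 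After taking expectations and using the It\^o isometry for the diffusion and jump martingale terms together with the Cauchy--Schwarz/Hölder inequality and the moment bound Lemma \ref{le:Ymoment} on $\E[|Y(s)|^2]$, I expect a one-step recursion of the schematic form
\begin{equation*}
      \E[|e_{n+1}|^2] \leq (1 + C\Delta)\,\E[|e_n|^2] + \Delta^{1+(\gamma\wedge 1)}\,C e^{Ct_{n+1}}.
\end{equation*}

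Summing this linear recursion with $e_0 = 0$ and invoking the discrete Gronwall inequality turns the accumulated local errors of order $\Delta^{1+(\gamma\wedge 1)}$ over $\sim t/\Delta$ steps into a global bound of order $\Delta^{\gamma\wedge 1}$, with the exponential factor $e^{Ct}$ arising both from the discrete Gronwall amplification $(1+C\Delta)^n \leq e^{Ct_n}$ and from the $e^{Ct}$ growth in the moment estimates — this is precisely where working on $\R^+$ rather than a fixed $[0,T]$ matters, since $C$ must stay independent of $t$ and all $t$-dependence is pushed into the explicit $e^{Ct}$. Finally I would pass from the grid bound on $\E[|e_n|^2]$ to the full statement for the piecewise-constant $Y_\Delta(t)$ in \eqref{eq:3.2} by adding and subtracting $Y(t_n)$ and applying Lemma \ref{le:Yholdercontinuity} once more on $[t_n, t)$. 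The main obstacle is the careful handling of the \textbf{implicit drift term}: the cross term $\langle e_{n+1}, f(t_{n+1},Y(t_{n+1}))-f(t_{n+1},Y_{n+1})\rangle$ must be split so that the part measured against $Y_{n+1}$ is absorbed into the coercive left-hand side via the stepsize constraint $\theta(1+\sqrt{C^*})\Delta<\rho$, while the remaining part (comparing $Y(t_{n+1})$ against $Y(t_n)$, i.e. consecutive exact values) contributes an additional $\Delta$-term that must be shown not to destroy the $\Delta^{\gamma\wedge1}$ order; keeping all the constants uniform in $t$ throughout this absorption step is the delicate bookkeeping that the whole estimate hinges on.
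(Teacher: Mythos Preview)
Your proposal is correct and follows essentially the same approach as the paper: expand $\E[|Y(t_n)-Y_n|^2]$ via the inner product with the error equation, split the implicit drift term $\theta\int_{t_{n-1}}^{t_n}(f(s,Y(s))-f(t_n,Y_n))\,\dif s$ into the path-regularity, time-H\"older, and error-propagation pieces (the paper's $I_n^{11},I_n^{12},I_n^{13}$), absorb the resulting $\theta(1+\sqrt{C^*})\Delta\,\E[|e_n|^2]$ into the left-hand side using the stepsize restriction, handle the explicit drift/diffusion/jump defects by It\^o isometry plus the same three-way splitting, and close with the discrete Gronwall inequality and Lemma~\ref{le:Yholdercontinuity} to pass from grid to continuous time. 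The only cosmetic discrepancy is that the paper's left-hand coefficient is $\tfrac{1}{2}-\theta(1+\sqrt{C^*})\Delta$ (the $\tfrac{1}{2}$ coming from a Young inequality on $\langle e_n,e_{n-1}+\cdots\rangle$) rather than your $(1-\theta\sqrt{C^*}\Delta)$, which explains why the full constraint $\theta(1+\sqrt{C^*})\Delta<\rho<\tfrac{1}{2}$ is needed and not merely $\theta\sqrt{C^*}\Delta<1$.
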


\begin{proof}
      For any $t \in [t_{n},t_{n+1}), n \in \N$, we have
      \begin{equation}\label{eq:YtmYdt2}
			\E\big[|Y(t)-Y_{\Delta}(t)|^{2}\big]
			=
			\E\big[|Y(t)-Y_{n}|^{2}\big]
			\leq
			2\E\big[|Y(t)-Y(t_{n})|^{2}\big]
			+
			2\E\big[|Y(t_{n})-Y_{n}|^{2}\big].
      \end{equation}
      Lemma \ref{le:Yholdercontinuity} implies that
      \begin{align}\label{eq:YtYtn2}
			\E\big[|Y(t) - Y(t_{n})|^{2}\big]
			\leq 
			(t-t_{n})Ce^{Ct}
			\leq 
			\Delta Ce^{Ct}.
      \end{align}
      Based on \eqref{eq:SDEintegral} and \eqref{eq:Stmethod}, we utilize
      \begin{align*}
			Y(t_{n})-Y_{n}
			=&~
			Y(t_{n-1}) - Y_{n-1}
			+
			\theta\int_{t_{n-1}}^{t_{n}}
			f(s,Y(s)) - f(t_{n},Y_{n}) \diff{s}
			\\&~+
			(1-\theta)\int_{t_{n-1}}^{t_{n}}
			f(s,Y(s)) - f(t_{n-1},Y_{n-1}) \diff{s}
			\\&~+
			\int_{t_{n-1}}^{t_{n}}
			g(s,Y(s)) - g(t_{n-1},Y_{n-1}) \diff{W(s)}
			\\&~+
			\int_{t_{n-1}}^{t_{n}} \int_{|z|<c} 
			h(s,Y(s),z) - h(t_{n-1},Y_{n-1},z)
			\,\tilde{N}(\dif{z},\dif{s})
      \end{align*}
      to obtain
      \begin{align}\label{eq:In1In2result}
			\E\big[|Y(t_{n})-Y_{n}|^{2}\big]
			=&~
			\E\bigg[\Big\langle Y(t_{n}) - Y_{n},
			\theta\int_{t_{n-1}}^{t_{n}}
			f(s,Y(s)) - f(t_{n},Y_{n}) \diff{s}
			\Big\rangle\bigg] \notag
			\\&~+
			\E\bigg[\Big\langle Y(t_{n}) - Y_{n},
			Y(t_{n-1}) - Y_{n-1}  \notag
			\\&~+
			(1-\theta)\int_{t_{n-1}}^{t_{n}}
			f(s,Y(s)) - f(t_{n-1},Y_{n-1}) \diff{s} \notag
			\\&~+
			\int_{t_{n-1}}^{t_{n}} \int_{|z|<c} 
			h(s,Y(s),z) - h(t_{n-1},Y_{n-1},z)
			\,\tilde{N}(\dif{z},\dif{s})  \notag
			\\&~+
			\int_{t_{n-1}}^{t_{n}}
			g(s,Y(s)) - g(t_{n-1},Y_{n-1}) \diff{W(s)}
			\Big\rangle\bigg] \notag
			\\=:&~
			I_{n}^{1} + I_{n}^{2}.
      \end{align}
      Note that
      \begin{align*}
			I_{n}^{1}
			=&~
			\theta\E\bigg[\int_{t_{n-1}}^{t_{n}}
			\big\langle Y(t_{n}) - Y_{n},
			f(s,Y(s)) - f(s,Y(t_{n})) \big\rangle
			\diff{s} \bigg]
			\\&~+
			\theta\E\bigg[\int_{t_{n-1}}^{t_{n}}
			\big\langle Y(t_{n}) - Y_{n},
			f(s,Y(t_{n})) - f(t_{n},Y(t_{n})) \big\rangle
			\diff{s} \bigg]
			\\&~+
			\theta\E\bigg[\int_{t_{n-1}}^{t_{n}}
			\big\langle Y(t_{n}) - Y_{n},
			f(t_{n},Y(t_{n})) - f(t_{n},Y_{n}) \big\rangle
			\diff{s} \bigg]
			\\=:&~ 
            I_{n}^{11} + I_{n}^{12} + I_{n}^{13}.
	  \end{align*}
      By Assumption \ref{As:(2.1)} and Lemma \ref{le:Yholdercontinuity}, one gets
      \begin{align*}
			I_{n}^{11}
			\leq&~
			\frac{\theta}{2}\E\bigg[\int_{t_{n-1}}^{t_{n}}
			|Y(t_{n}) - Y_{n}|^{2}
			+
			|f(s,Y(s)) - f(s,Y(t_{n}))|^{2}
			\diff{s} \bigg]
			\\\leq&~
			\frac{\theta}{2}\Delta
			\E\big[|Y(t_{n}) - Y_{n}|^{2}\big]
			+
			\frac{\theta}{2}C^{*} \int_{t_{n-1}}^{t_{n}}
			\E\big[|Y(s) - Y(t_{n})|^{2}\big]\diff{s}
			\\\leq&~
			\frac{\theta}{2}\Delta
			\E\big[|Y(t_{n}) - Y_{n}|^{2}\big]
			+
			\frac{\theta}{2}\Delta^{2}C^{*}C e^{Ct_{n}}.
	  \end{align*}
      Similarly, we use Assumption \ref{As:fghholdercont} and Lemma \ref{le:Ymoment} to derive that 
	  \begin{align*}
			I_{n}^{12}
			\leq&~
			\frac{\theta}{2}\Delta
			\E\big[|Y(t_{n}) - Y_{n}|^{2}\big]
			+
			\frac{\theta}{2}\E\bigg[\int_{t_{n-1}}^{t_{n}}
			|f(s,Y(t_{n})) - f(t_{n},Y(t_{n}))|^{2}
			\diff{s} \bigg]
			\\\leq&~
			\frac{\theta}{2}\Delta
			\E\big[|Y(t_{n}) - Y_{n}|^{2}\big]
			+
			\frac{\theta}{2}\Delta^{1+\gamma}
			C\big(1 + \E\big[|Y(t_{n})|^{2}\big]\big)
			\\\leq&~
			\frac{\theta}{2}\Delta
			\E\big[|Y(t_{n}) - Y_{n}|^{2}\big]
			+
			\frac{\theta}{2}\Delta^{1+\gamma}
			C\big(1 + Ce^{Ct_{n}}\big).
      \end{align*}
      Making use of Assumption \ref{As:(2.1)} yields
	  \begin{align*}
			I_{n}^{13}
			\leq&~
			\theta\E\bigg[\int_{t_{n-1}}^{t_{n}}
			|Y(t_{n}) - Y_{n}|
			|f(t_{n},Y(t_{n})) - f(t_{n},Y_{n})| 
			\diff{s} \bigg]
            \\\leq&~
			\theta\sqrt{C^{*}}\Delta
			\E\big[|Y(t_{n}) - Y_{n}|^{2}\big].
      \end{align*}
      It follows that
      \begin{align}\label{eq:In1result}
			I_{n}^{1}
			\leq&~
			\theta\big(\sqrt{C^{*}} + 1\big)\Delta
			\E\big[|Y(t_{n}) - Y_{n}|^{2}\big]
			+
			\frac{\theta}{2}\Delta^{2}C^{*}C e^{Ct_{n}}
			+
			\frac{\theta}{2}\Delta^{1+\gamma}
			C\big(1 + Ce^{Ct_{n}}\big).
	  \end{align}
      Concerning $I_{n}^{2}$, the independence between $\{W(t)\}_{t \geq 0}$ and $\{\tilde{N}(t,\cdot)\}_{t \geq 0}$ as well as the $\F_{t_{n}}$-measurability of $Y(t_{n})$ and $Y_{n}$ enable us to get 
      \begin{align*}
			I_{n}^{2}
			\leq&~
			\frac{1}{2}\E\big[|Y(t_{n}) - Y_{n}|^{2}\big]
			+
			\frac{1}{2}
			\E\big[|Y(t_{n-1}) - Y_{n-1}|^{2}\big]
			\\&~+
			\frac{1}{2}(1-\theta)^{2}
			\E\bigg[\Big|\int_{t_{n-1}}^{t_{n}}
			f(s,Y(s)) - f(t_{n-1},Y_{n-1}) 
			\diff{s}\Big|^{2}\bigg]
			\\&~+
			\frac{1}{2}\E\bigg[\Big|
			\int_{t_{n-1}}^{t_{n}} \int_{|z|<c} 
			h(s,Y(s),z) - h(t_{n-1},Y_{n-1},z)
			\,\tilde{N}(\dif{z},\dif{s})\Big|^{2}\bigg]
			\\&~+
			\frac{1}{2}\E\bigg[\Big|\int_{t_{n-1}}^{t_{n}}
			g(s,Y(s)) - g(t_{n-1},Y_{n-1}) 
			\diff{W(s)}\Big|^{2}\bigg]
			\\&~+
			(1-\theta)
			\E\bigg[\Big\langle Y(t_{n-1}) - Y_{n-1},
			\int_{t_{n-1}}^{t_{n}}
			f(s,Y(s)) - f(t_{n-1},Y_{n-1}) 
			\diff{s}\Big\rangle\bigg]
			\\&~+
			(1-\theta)
			\E\bigg[\Big\langle \int_{t_{n-1}}^{t_{n}}
			f(s,Y(s)) - f(t_{n-1},Y_{n-1}) \diff{s},
            \\&~\int_{t_{n-1}}^{t_{n}} \int_{|z|<c} 
			h(s,Y(s),z) - h(t_{n-1},Y_{n-1},z)
			\,\tilde{N}(\dif{z},\dif{s})
            \Big\rangle\bigg]
			\\&~+
			(1-\theta)
			\E\bigg[\Big\langle \int_{t_{n-1}}^{t_{n}}
			f(s,Y(s)) - f(t_{n-1},Y_{n-1}) \diff{s},
            \\&~\int_{t_{n-1}}^{t_{n}}
			g(s,Y(s)) - g(t_{n-1},Y_{n-1}) \diff{W(s)}
            \Big\rangle\bigg].
      \end{align*}
      Then the H\"{o}lder inequality and the It\^{o} isometry imply that
	  \begin{align*}
			I_{n}^{2}            
			\leq&~
			\frac{1}{2}\E\big[|Y(t_{n}) - Y_{n}|^{2}\big]
			+
			\frac{1 + (1-\theta)\Delta}{2}
			\E\big[|Y(t_{n-1}) - Y_{n-1}|^{2}\big]
			\\&~+
			2\int_{t_{n-1}}^{t_{n}}\E\big[|
			f(s,Y(s)) - f(t_{n-1},Y_{n-1})|^{2}\big]\diff{s}
			\\&~+
			\int_{t_{n-1}}^{t_{n}} \int_{|z|<c} 
			\E\big[|h(s,Y(s),z) - h(t_{n-1},Y_{n-1},z)|^{2}\big]
			\,\nu(\dif{z})\dif{s}
			\\&~+
			\int_{t_{n-1}}^{t_{n}}
			\E\big[|g(s,Y(s)) - g(t_{n-1},Y_{n-1})|^{2}\big]
			\diff{s}
			\\=:&~
			\frac{1}{2}\E\big[|Y(t_{n}) - Y_{n}|^{2}\big]
			+
			\frac{1 + (1-\theta)\Delta}{2}
			\E\big[|Y(t_{n-1}) - Y_{n-1}|^{2}\big]
			\\&~+
			I_{n}^{21} + I_{n}^{22} + I_{n}^{23}.
      \end{align*}
      Because of Assumptions \ref{As:(2.1)} and \ref{As:fghholdercont}, one has
	  \begin{align*}
			I_{n}^{21}
			\leq&~
			6\int_{t_{n-1}}^{t_{n}}
			\E\big[|f(s,Y(s)) 
			- f(s,Y(t_{n-1}))|^{2}\big]\diff{s}
			\\&~+
			6\int_{t_{n-1}}^{t_{n}}
			\E\big[|f(s,Y(t_{n-1})) 
			- f(t_{n-1},Y(t_{n-1}))|^{2}\big]
			\diff{s}
			\\&~+
			6\int_{t_{n-1}}^{t_{n}}
			\E\big[|f(t_{n-1},Y(t_{n-1})) 
			- f(t_{n-1},Y_{n-1})|^{2}\big]\diff{s}
			\\\leq&~
			6C\Delta^{1+\gamma}
			\big(1 + \E\big[|Y(t_{n-1})|^{2}\big]\big)
			+
			6C^{*}\Delta
			\E\big[|Y(t_{n-1}) - Y_{n-1}|^{2}\big]
			\\&~+
			6C^{*}\int_{t_{n-1}}^{t_{n}}
			\E\big[|Y(s) - Y(t_{n-1})|^{2}\big]\diff{s}.
      \end{align*}
      In the same way, we obtain
      \begin{align*}
			I_{n}^{22}
			\leq&~
			3C\Delta^{1+\gamma}
			\big(1 + \E\big[|Y(t_{n-1})|^{2}\big]\big)
			+
			3C^{*}\Delta
			\E\big[|Y(t_{n-1}) - Y_{n-1}|^{2}\big] 
			\\&~+
			3C^{*}\int_{t_{n-1}}^{t_{n}}
			\E\big[|Y(s) - Y(t_{n-1})|^{2}\big]\diff{s} 
      \end{align*}
	  and
	  \begin{align*}
			I_{n}^{23}
			\leq&~
			3C\Delta^{1+\gamma}
			\big(1 + \E\big[|Y(t_{n-1})|^{2}\big]\big)
			+
			3C^{*}\Delta
			\E\big[|Y(t_{n-1}) - Y_{n-1}|^{2}\big]
			\\&~+
		    3C^{*}\int_{t_{n-1}}^{t_{n}}
			\E\big[|Y(s) - Y(t_{n-1})|^{2}\big]\diff{s}.          
	  \end{align*}
      In combination with Lemmas \ref{le:Ymoment} and \ref{le:Yholdercontinuity}, we deduce that
      \begin{align}\label{eq:In2result} 
			I_{n}^{2} 
			\leq&~
			\frac{1}{2}\E\big[|Y(t_{n}) - Y_{n}|^{2}\big]
			+
			\bigg(\frac{1 + (1-\theta)\Delta}{2} 
			+ 12C^{*}\Delta\bigg)
			\E\big[|Y(t_{n-1}) - Y_{n-1}|^{2}\big] \notag
			\\&~+
			12C^{*}\int_{t_{n-1}}^{t_{n}}
			\E\big[|Y(s) - Y(t_{n-1})|^{2}\big]\diff{s}
			+
			12C\Delta^{1+\gamma}
			\big(1 + \E\big[|Y(t_{n-1})|^{2}\big]\big) \notag
			\\\leq&~
			\frac{1}{2}\E\big[|Y(t_{n}) - Y_{n}|^{2}\big]
			+
			12C^{*}\Delta^{2}Ce^{Ct_{n}} \notag
			+
			12C\Delta^{1+\gamma}\big(1 + Ce^{Ct_{n-1}}\big)
			\\&~+
			\bigg(\frac{1 + (1-\theta)\Delta}{2} 
			+ 12C^{*}\Delta\bigg)
			\E\big[|Y(t_{n-1}) - Y_{n-1}|^{2}\big]. 
	  \end{align}
      Inserting \eqref{eq:In1result} and \eqref{eq:In2result} into \eqref{eq:In1In2result} yields
	  \begin{align*}
			&~\bigg(\frac{1}{2} 
			- \theta(1 + \sqrt{C^{*}})\Delta\bigg)
			\E\big[|Y(t_{n})-Y_{n}|^{2}\big]
			\\\leq&~
			\bigg(\frac{1 + (1-\theta)\Delta}{2} 
			+ 12C^{*}\Delta\bigg)
			\E\big[|Y(t_{n-1}) - Y_{n-1}|^{2}\big] \notag
			+
			\Delta^{2 \wedge (1+\gamma)}C e^{Ct_{n}}.           
	  \end{align*} 
      and thus 
      \begin{align*}
			\E\big[|Y(t_{n})-Y_{n}|^{2}\big]
			\leq
			\frac{\frac{1 + (1-\theta)\Delta}{2} 
				+ 12C^{*}\Delta}{\frac{1}{2} 
				- \theta(1 + \sqrt{C^{*}})\Delta}
			\E\big[|Y(t_{n-1}) - Y_{n-1}|^{2}\big] \notag
			+
			\frac{\Delta^{2 \wedge (1+\gamma)}C e^{Ct_{n}}}
			{\frac{1}{2} - \theta(1 + \sqrt{C^{*}})\Delta}.           
      \end{align*}
      Noting that $\big(\frac{1}{2} - \theta (1 + \sqrt{C^{*}})\Delta\big)^{-1} \leq \big(\frac{1}{2} - \rho\big)^{-1}$ and
      \begin{align*}
			&~\E\big[|Y(t_{n})-Y_{n}|^{2}\big]
			+
			\sum_{i=1}^{n-1}
			\E\big[|Y(t_{i})-Y_{i}|^{2}\big]
			=
			\sum_{i=1}^{n}
			\E\big[|Y(t_{i})-Y_{i}|^{2}\big]
			\\\leq&~
			\bigg(1 + \frac{\frac{1-\theta}{2} + 12C^{*}
				+ \theta(1 + \sqrt{C^{*}})}{\frac{1}{2} 
				- \theta(1 + \sqrt{C^{*}})\Delta}\Delta\bigg)
			\sum_{i=1}^{n-1}\E\big[|Y(t_{i}) - Y_{i}|^{2}\big] \notag
			+
			\frac{n\Delta^{2 \wedge (1+\gamma)}C e^{Ct_{n}}}
			{\frac{1}{2} - \theta(1 + \sqrt{C^{*}})\Delta},         
      \end{align*}
      we obtain
      \begin{align*}
			\E\big[|Y(t_{n})-Y_{n}|^{2}\big]
			\leq&~
			\frac{\frac{1-\theta}{2} + 12C^{*}
				+ \theta(1 + \sqrt{C^{*}})}{\frac{1}{2} - \rho}
			\Delta\sum_{i=1}^{n-1}
			\E\big[|Y(t_{i}) - Y_{i}|^{2}\big]  \notag
			+
			\frac{n\Delta^{2 \wedge (1+\gamma)}C e^{Ct_{n}}}
			{\frac{1}{2} - \rho}.
      \end{align*}
      The discrete Gronwall inequality \cite[Lemma 3.4]{mao2013strong} implies that
      \begin{align}\label{eq:YtnmYn2}
			\E\big[|Y(t_{n})-Y_{n}|^{2}\big]
			\leq
			\Delta^{1 \wedge \gamma}C e^{Ct_{n}}
			\leq
			\Delta^{1 \wedge \gamma}C e^{Ct}
			e^{C(t_{n}-t)}
			\leq
			\Delta^{1 \wedge \gamma}C e^{Ct}       
      \end{align}
      due to $t_{n}-t \leq \Delta < 1$ and $n\Delta = t_n \leq \frac{1+Ct_{n}}{C} \leq \frac{e^{Ct_n}}{C}$ for all $n \in \N$ and some $C > 0$. Plugging \eqref{eq:YtmYdt2} and \eqref{eq:YtnmYn2} into \eqref{eq:YtYtn2} finishes the proof.
\end{proof}

Now we turn to approximate $\{E(t)\}_{t \in [0,T]}$ via $\{E_{\Delta}(t)\}_{t \in [0,T]}$, which is defined by
\begin{equation}\label{eq:3.8}
      E_{\Delta}(t)
      :=
      (\min\{n \in \N : D(t_n)>t\} - 1)\Delta,
      \quad t \in [0,T].
\end{equation}
Then each sample path of stochastic process $\{E_{\Delta}(t)\}_{t \in [0,T]}$ is a non-decreasing step function with constant jump size $\Delta$, and the procedure is stopped when $T \in [D(t_{N}), D(t_{N+1}))$ for some $N \in \N$. It follows that $E_\Delta(T) = t_{N}$ and 
\begin{equation}\label{eq:EDeltattn}
      E_{\Delta}(t)
	  =
	  t_{n}, \quad t \in [D(t_{n}),D(t_{n+1})),
	  n = 0, 1,\cdots, N.
\end{equation}
Moreover, $\{E(t)\}_{t \in [0,T]}$ can be well approximated by $\{E_{\Delta}(t)\}_{t \in [0,T]}$ in the sense that 
\begin{equation}\label{eq:EtEDeltaterror}
	  E(t)-\Delta 
      \leq
	  E_{\Delta}(t)\leq E(t), 
      \quad t \in [0,T].
\end{equation}

%
%
%
%

Lemma \ref{le:2.5} indicates that the solution $\{X(t)\}_{t \in [0,T]}$ to \eqref{eq:TCSDE} can be expressed as $Y(E(t))$ with $Y(t)$ being the solution to  \eqref{eq:SDE}. Thus, it is reasonable to approximate $\{X(t)\}_{t \in [0,T]}$ by the stochastic process $\{X_{\Delta}(t)\}_{t \in [0,T]}$, which is described by
\begin{equation}\label{eq:3.9}
	  X_{\Delta}(t)
	  :=
	  Y_{\Delta}(E_\Delta(t)),
	  \quad t \in [0,T],
\end{equation}
where $Y_{\Delta}$ and $E_\Delta $ are defined by \eqref{eq:3.2} and \eqref{eq:3.8}, respectively. Based on the above preparations, we are in a position to establish the convergence order of numerical approximations given by \eqref{eq:3.9}.

\begin{theorem}\label{th:strongorder}
      Suppose that Assumptions \ref{As:(2.1)}, \ref{As:(2.3)} and \ref{As:fghholdercont} hold and let $\theta \sqrt{C^{*}} \Delta < 1$. Then there exists $C > 0$, independent of $T$ and $\Delta$, such that
      \begin{equation*}
			\E\big[|X(T)-X_\Delta(T)|^{2}\big]
			\leq
			Ce^{CT}\Delta^{\gamma \wedge 1}.
      \end{equation*}
\end{theorem}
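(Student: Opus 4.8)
The plan is to decompose the total error into the two sources identified in the text: the numerical integration error for the original SDE \eqref{eq:SDE} and the error coming from approximating the inverse subordinator $E(t)$ by $E_\Delta(t)$. Using the duality principle (Lemma \ref{le:2.5}), I would write $X(T) = Y(E(T))$ with $Y$ solving \eqref{eq:SDE}, while the numerical approximation is $X_\Delta(T) = Y_\Delta(E_\Delta(T))$ by \eqref{eq:3.9}. Inserting the intermediate term $Y(E_\Delta(T))$ and applying the elementary inequality $|a+b|^2 \le 2|a|^2 + 2|b|^2$, I would split
\begin{align*}
      \E\big[|X(T)-X_\Delta(T)|^2\big]
      \le
      2\,\E\big[|Y(E(T))-Y(E_\Delta(T))|^2\big]
      +
      2\,\E\big[|Y(E_\Delta(T))-Y_\Delta(E_\Delta(T))|^2\big].
\end{align*}
The first term measures the sensitivity of the exact solution $Y$ to the time-change error; the second is the discretization error of the stochastic $\theta$ method evaluated at the (random) time $E_\Delta(T)$.

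For the first term, the key is that $E_\Delta(T) = t_N$ equals the subordinator-based time point and, by \eqref{eq:EtEDeltaterror}, satisfies $E(T) - \Delta \le E_\Delta(T) \le E(T)$, so $0 \le E(T) - E_\Delta(T) \le \Delta < 1$. The natural tool is the H\"older continuity estimate of Lemma \ref{le:Yholdercontinuity}, which gives $\E[|Y(t)-Y(s)|^2] \le (t-s)Ce^{Ct}$ for $0 \le t - s < 1$. The subtlety is that $E(T)$ and $E_\Delta(T)$ are random and, crucially, are \emph{independent} of the driving noise $\{W(t)\}$ and $\{N(\dif z, \dif t)\}$ of $Y$ (since $\{D(t)\}$ is assumed independent of $\{W(t)\}$ and $\{N(t,\cdot)\}$). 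I would therefore condition on the $\sigma$-algebra generated by the subordinator, apply Lemma \ref{le:Yholdercontinuity} pointwise in the realized values of $E(T)$ and $E_\Delta(T)$, and then take the outer expectation over the time change. This yields a bound of the form $\E[(E(T)-E_\Delta(T))\,Ce^{CE(T)}] \le \Delta\,C\,\E[e^{CE(T)}]$, and the moment bound \eqref{eq:estimateEelambdaEt}, namely $\E[e^{\lambda E(t)}] \le Ce^{Ct}$, controls $\E[e^{CE(T)}] \le Ce^{CT}$. This gives the first term a bound $\le \Delta\,Ce^{CT}$.

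For the second term I would likewise condition on the time change and apply Lemma \ref{Th:3.4}, which states $\E[|Y(t)-Y_\Delta(t)|^2] \le \Delta^{\gamma\wedge 1}Ce^{Ct}$ uniformly in $t \ge 0$. Since $E_\Delta(T) = t_N$ is a mesh point and is independent of the noise driving $Y$, conditioning on its realized value and invoking Lemma \ref{Th:3.4} at that value gives $\E[|Y(E_\Delta(T))-Y_\Delta(E_\Delta(T))|^2] \le \Delta^{\gamma\wedge 1}\,C\,\E[e^{CE_\Delta(T)}]$; using $E_\Delta(T) \le E(T)$ and \eqref{eq:estimateEelambdaEt} again bounds $\E[e^{CE_\Delta(T)}] \le \E[e^{CE(T)}] \le Ce^{CT}$, so the second term is $\le \Delta^{\gamma\wedge 1}Ce^{CT}$. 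Combining the two bounds and noting $\Delta \ge \Delta^{\gamma\wedge 1}$ for $\Delta < 1$ yields the claimed rate $\E[|X(T)-X_\Delta(T)|^2] \le Ce^{CT}\Delta^{\gamma\wedge 1}$.

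The main obstacle I anticipate is \emph{justifying the conditioning argument rigorously}, i.e.\ transferring the deterministic-time estimates of Lemmas \ref{le:Yholdercontinuity} and \ref{Th:3.4} to the random times $E(T)$ and $E_\Delta(T)$. This requires the independence of the subordinator from the noise driving $Y$ (guaranteed by the standing independence assumption), together with a Fubini/tower-property argument and care that the numerical scheme $Y_\Delta$ and the solution $Y$ are built from the \emph{same} noise that is independent of $\{D(t)\}$, so that evaluating at the random mesh point $t_N$ is legitimate. A secondary technical point is the measurability and integrability needed to interchange expectation over the time change with the per-path estimates, and verifying that the exponential-moment bound \eqref{eq:estimateEelambdaEt} absorbs the $e^{CE(T)}$ factors without spoiling the $T$-dependence.
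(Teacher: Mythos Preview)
Your proposal is correct and follows essentially the same route as the paper's proof: the same duality-based decomposition via $Y(E_\Delta(T))$, the same appeal to Lemma~\ref{le:Yholdercontinuity} with \eqref{eq:EtEDeltaterror} and \eqref{eq:estimateEelambdaEt} for the first term, and the same appeal to Lemma~\ref{Th:3.4} with $E_\Delta(T)\le E(T)$ for the second. One small slip: for $\Delta\in(0,1)$ and $\gamma\wedge 1\le 1$ you have $\Delta\le \Delta^{\gamma\wedge 1}$ (not $\ge$), which is precisely the inequality needed to absorb the $\Delta$-term into the final $\Delta^{\gamma\wedge 1}$ rate.
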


\begin{proof}
      Owing to \eqref{eq:3.9} and Lemma \ref{le:2.5}, one has
      \begin{align*}
			&~\E\big[|X(T)-X_{\Delta}(T)|^{2}\big]
			= 
			\E\big[|Y(E(T))-Y_{\Delta}(E_{\Delta}(T))|^{2}\big]
			\\\leq&~
			2\E\big[|Y(E(T))-Y(E_{\Delta}(T))|^{2}\big]
			+
			2\E\big[|Y(E_{\Delta}(T))
            -
            Y_{\Delta}(E_{\Delta}(T))|^{2}\big].
      \end{align*}
      From \eqref{eq:EtEDeltaterror}, we get $0 \leq E(T)-E_{\Delta}(T)\leq \Delta$, which together with \eqref{eq:estimateEelambdaEt} and Lemma \ref{le:Yholdercontinuity} yields
      \begin{equation*}
			\E\big[|Y(E(T))-Y(E_\Delta(T))|^{2}\big]
			\leq
			\Delta C \E\big[e^{CE(T)}\big]
			\leq
			C\Delta e^{CT}.
      \end{equation*}
      Applying \eqref{eq:estimateEelambdaEt}, \eqref{eq:EtEDeltaterror} and Lemma \ref{Th:3.4} leads to
      \begin{equation*}
			\E\big[|Y(E_\Delta(T))
			- Y_\Delta(E_\Delta(T))|^{2}\big]
			\leq
			C\Delta^{\gamma\wedge1}
			\E\big[e^{CE_{\Delta}(T)}\big]
			\leq
			C\Delta^{\gamma\wedge1}
			\E\big[e^{CE(T)}\big]
			\leq
			C\Delta^{\gamma\wedge1}e^{CT}.
      \end{equation*}
      It follows that
      \begin{equation*}
			\E\big[|X(T)-X_\Delta(T)|^{2}\big]
			\leq
			C\Delta e^{CT}
			+
			C\Delta^{\gamma\wedge1}e^{CT}
			\leq
			C\Delta^{\gamma\wedge1}e^{CT},
      \end{equation*}
      where $C > 0$ is independent of $T$ and $\Delta$. 
\end{proof}

\section{Weak convergence order of Euler--Maruyama method}
\label{sect:weakorder}
This section focuses on the weak convergence order of numerical approximations for \eqref{eq:TCSDE}. Similar to the treatment of strong convergence analysis in Section \ref{sect:strongorder}, the corresponding weak approximation still involves two types of errors: one is due to the simulation of inverse subordinator and the other ascribed to the following Euler--Maruyama method (i.e., \eqref{eq:Stmethod} with $\theta = 0$)
\begin{align}\label{eq:4.1}
	  Y_{n+1}
	  =
	  Y_n + f(t_n,Y_n)\Delta + g(t_n,Y_n)\Delta{W_{n}}  
	  +
	  \int_{t_{n}}^{t_{n+1}}
	  \int_{|z|<c}h(t_n,Y_n,z)
	  \,\tilde{N}(\dif{z},\dif{t}), \quad n \in \N
\end{align}
with $Y_0 = x_{0}$, where $t_n := n\Delta$ and $\Delta W_n := W(t_{n+1})-W(t_n)$. Defining a continuous-time version of $\{Y_{n}\}_{n \in \N}$ by
\begin{align}\label{eq:4.2}
	  Y_{\Delta}(t)
	  =&~
	  Y_0
	  +
	  \int_{0}^{t}
	  f(\check{s},Y_{\Delta}(\check{s}))\diff{s}
	  +
	  \int_{0}^{t}
	  g(\check{s},Y_{\Delta}(\check{s}))
	  \diff{W(s)}\notag
	  \\&~+
	  \int_{0}^{t}\int_{|z|<c}
	  h(\check{s},Y_{\Delta}(\check{s}),z)
	  \,\tilde{N}(\dif{z},\dif{s}), 
	  \quad t \geq 0
\end{align}      
with $\check{s} := \big(\max\{n \in \N : n\Delta < s\}\big)\Delta$, we have $\check{t} = n\Delta = t_{n}$ for $t \in [t_n,t_{n+1})$ and $Y_{\Delta}(t_{n}) = Y_{n}, n \in \N$. By imposing a stronger version of the linear growth condition on the coefficients with respect to the state variables, one can derive the higher order moment of $\{Y(t)\}_{t \geq 0}$ and $\{Y_{\Delta}(t)\}_{t \geq 0}$.

\begin{assumption}\label{As:fghgrowthcond}
      For any $p \geq 2$, there exists a positive constant $K_{fgh} \geq 1$ such that for all $t \geq 0$ and $x \in \R^{d}$,  
	  \begin{equation*}
		    |f(t,x)|^{p} + |g(t,x)|^{p}
		    + 
            \int_{|z|<c}|h(t,x,z)|^{p} \,\nu(\dif{z})
		    \leq
		    K_{fgh}(1+|x|^{p}).
	  \end{equation*}
\end{assumption}

\begin{lemma}\label{lm:Ytpexactbound}
      Suppose that Assumptions \ref{As:(2.1)} and \ref{As:fghgrowthcond} hold. Then for any $p \geq 2$, there exists $C > 0$, independent of $t$, such that
      \begin{equation*}
			\E\big[|Y(t)|^{p}\big]
			\leq 
			Ce^{Ct}, \quad t \geq 0.  
      \end{equation*}
\end{lemma}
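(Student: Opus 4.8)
The plan is to apply the It\^{o} formula to the function $x \mapsto |x|^{p}$ along the solution of \eqref{eq:SDEintegral} and then take expectations, reducing the claim to a Gronwall argument exactly as in Lemma \ref{le:Ymoment}. Since a priori we do not know that the $p$-th moment is finite, I would first localize: introduce the stopping times $\tau_{R} := \inf\{t \geq 0 : |Y(t)| \geq R\}$ and work with $Y(t \wedge \tau_{R})$. For the stopped process the stochastic integrals against $\diff{W}$ and $\tilde{N}(\dif{z},\dif{s})$ are genuine martingales with zero expectation, so after taking expectations only the drift-type compensator terms survive; at the end I would let $R \to \infty$ and invoke Fatou's lemma to recover the bound for $\E[|Y(t)|^{p}]$.

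Concretely, the It\^{o} formula for $|Y(t)|^{p}$ produces, besides the vanishing martingale parts, a drift contribution of the form $p|Y(s)|^{p-2}\langle Y(s), f(s,Y(s))\rangle$, a diffusion contribution controlled by $|Y(s)|^{p-2}|g(s,Y(s))|^{2}$, and a jump compensator
$$\int_{|z|<c}\big(|Y(s)+h(s,Y(s),z)|^{p} - |Y(s)|^{p} - p|Y(s)|^{p-2}\langle Y(s), h(s,Y(s),z)\rangle\big)\,\nu(\dif{z}).$$
Each continuous term I would bound by $C(1+|Y(s)|^{p})$ using Young's inequality (to absorb $|Y|^{p-1}|f|$ and $|Y|^{p-2}|g|^{2}$ into $|Y|^{p}$ plus $|f|^{p}$ and $|g|^{p}$) together with the $p$-th order growth bound of Assumption \ref{As:fghgrowthcond}. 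For the jump compensator the key tool is the elementary estimate, valid for $p \geq 2$ and $a,b \in \R^{d}$,
$$\big||a+b|^{p} - |a|^{p} - p|a|^{p-2}\langle a, b\rangle\big| \leq C_{p}\big(|a|^{p-2}|b|^{2} + |b|^{p}\big),$$
which reduces the integrand to $|Y(s)|^{p-2}|h|^{2} + |h|^{p}$; integrating against $\nu$ and applying the $|h|^{2}$ bound of Assumption \ref{As:(2.1)} and the $|h|^{p}$ bound of Assumption \ref{As:fghgrowthcond} again yields a bound of the form $C(1+|Y(s)|^{p})$.

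Collecting these estimates gives $\E[|Y(t \wedge \tau_{R})|^{p}] \leq |x_{0}|^{p} + C\int_{0}^{t}\big(1 + \E[|Y(s \wedge \tau_{R})|^{p}]\big)\,\diff{s}$, and the Gronwall inequality \cite[Theorem 1]{dragomir2003some} produces a bound of the shape $(|x_{0}|^{p} + Ct)e^{Ct}$; using $t \leq e^{Ct}/C$ as elsewhere in the paper, this absorbs into $Ce^{Ct}$, and letting $R \to \infty$ finishes the argument. I expect the main obstacle to be technical rather than conceptual: verifying the elementary jump inequality in the vector-valued $\R^{d}$ setting and confirming that the localization indeed renders the two stochastic integrals martingales (which rests on the second-moment control already furnished by Lemma \ref{le:Ymoment} and Assumption \ref{As:(2.1)}). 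Everything else is a routine propagation of the growth conditions through the It\^{o} formula.
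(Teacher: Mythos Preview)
Your proposal is correct and follows essentially the same route as the paper: It\^{o} formula for $|Y(t)|^{p}$, Young's inequality plus the growth bounds of Assumptions \ref{As:(2.1)} and \ref{As:fghgrowthcond} to reduce every term to $C(1+|Y(s)|^{p})$, then Gronwall. The only differences are cosmetic---the paper skips the localization step and bounds the jump compensator more crudely via $|Y+h|^{p} + |Y|^{p} + p|Y|^{p-1}|h|$ rather than your Taylor-remainder estimate---so your version is, if anything, slightly more careful.
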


\begin{proof}
      Applying the It\^{o} formula indicates that for any $t \geq 0$,
	  \begin{align*}
			|Y(t)|^{p}
			=&~
			|Y(0)|^{p}
			+
			p\int_{0}^{t}|Y(s)|^{p-2} 
			\big\langle Y(s),f(s,Y(s)) 
			\big\rangle\diff{s}
			\\&~+
			\frac{p}{2}\int_{0}^{t}
			|Y(s)|^{p-2}|g(s,Y(s))|^{2}\diff{s}
			\\&~+
			\frac{p(p-2)}{2}\int_0^t|Y(s)|^{p-4}
			|Y(s)^{\top}g(s,Y(s))|^{2}\diff{s}
			\\&~+
			p\int_0^t |Y(s)|^{p-2}|\big\langle Y(s),
			g(s,Y(s)) \diff{W(s)}\big\rangle
			\\&+
			\int_{0}^{t}\int_{|z|<c} \big(|Y(s) 
			+ h(s,Y(s),z)|^{p} - |Y(s)|^{p}\big)
			\tilde{N}(\dif{z},\dif{s})     
			\\&+
			\int_{0}^{t}\int_{|z|<c}
			\big(|Y(s)+h(s,Y(s),z)|^{p} -|Y(s)|^{p} 
			\\&~- p|Y(s)|^{p-2}\big\langle Y(s),
			h(s,Y(s),z) \big\rangle\big)
			\,\nu(\dif{z})\dif{s}.
	  \end{align*}
      By taking expectations, we have
      \begin{align*}
			\E\big[|Y(t)|^{p}\big] 
			\leq&~
			|Y(0)|^{p}
			+
			p\E\bigg[\int_{0}^{t} 
			|Y(s)|^{p-1}|f(s,Y(s))|\diff{s}\bigg]
			\\&~+
			\frac{p(p-1)}{2}\E\bigg[\int_0^t
			|Y(s)|^{p-2}|g(s,Y(s))|^2\diff{s}\bigg]  
			\\&~+
			\E\bigg[\int_{0}^{t}\int_{|z|<c}|Y(s)
            + h(s,Y(s),z)|^{p} \,\nu(\dif{z})\dif{s}\bigg]
			\\&~+
			p\E\bigg[\int_0^t\int_{|z|<c}|Y(s)|^{p-1} 
            |h(s,Y(s),z)|\,\nu(\dif{z})\dif{s}\bigg]
			\\&~+
			\nu(|z|<c)\E\bigg[\int_{0}^{t}
			|Y(s)|^{p} \diff{s}\bigg].
      \end{align*}
      From the Young inequality and Assumption \ref{As:fghgrowthcond}, it follows that
      \begin{align*}
			\E\big[|Y(t)|^{p}\big] 
			\leq&~
			|Y(0)|^{p}
			+
			\E\bigg[\int_{0}^{t} (p-1)|Y(s)|^{p} 
			+ |f(s,Y(s))|^{p}\diff{s}\bigg]
			\\&~+
			\frac{p-1}{2}\E\bigg[\int_0^t
			(p-2)|Y(s)|^{p}
			+ 2|g(s,Y(s))|^{p}\diff{s}\bigg]  
			\\&~+
			2^{p}\E\bigg[\int_0^t\int_{|z|<c}
			\big(|Y(s)|^{p} + |h(s,Y(s),z)|^{p}\big) 
			\,\nu(\dif{z})\dif{s}\bigg]
			\\&~+
			\E\bigg[\int_0^t\int_{|z|<c}(p-1)|Y(s)|^{p} 
			+ |h(s,Y(s),z)|^{p}\,\nu(\dif{z})\dif{s}\bigg]
			\\&~+
			\nu(|z|<c)\E\bigg[\int_0^t|Y(s)|^p\dif{s}\bigg]
			\\\leq&~
			|Y(0)|^{p} + Ct + C\int_0^t \E [|Y(s)|^{p}]\diff{s}
			\\\leq&~
			Ce^{Ct} + C\int_{0}^{t} \E\big[|Y(s)|^{p}\big]\diff{s}.
      \end{align*}
      Owing to the Gronwall inequality, 
      we complete the proof.
\end{proof}


\begin{lemma}\label{lemma 2.4}
      Suppose that Assumptions \ref{As:(2.1)} and \ref{As:fghgrowthcond} hold. Then for any $p \geq 2$, there exists $C > 0$, independent of $t > 0$, such that
      \begin{equation*}
            \E\big[|Y_{\Delta}(t)|^{p}\big]
			\leq Ce^{Ct}, \quad t \geq 0.
	  \end{equation*}
\end{lemma}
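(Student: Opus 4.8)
The plan is to prove the uniform-in-$\Delta$ moment bound for the continuous-time Euler--Maruyama interpolation $\{Y_\Delta(t)\}_{t\geq 0}$ by essentially mirroring the proof of Lemma \ref{lm:Ytpexactbound}, but now applied to the integral representation \eqref{eq:4.2} rather than to the exact equation \eqref{eq:SDEintegral}. First I would apply the It\^{o} formula to $|Y_\Delta(t)|^p$ using \eqref{eq:4.2}; this produces exactly the same six groups of terms as in the previous lemma, except that every evaluation of the coefficients $f,g,h$ occurs at the frozen point $(\check{s},Y_\Delta(\check{s}))$ rather than at $(s,Y_\Delta(s))$. Taking expectations kills the two martingale terms (the $\diff W(s)$ integral and the compensated Poisson integral), leaving drift-type integrals that I would bound from above via the Young inequality, exactly as before.

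**The key difference** is that after invoking Assumption \ref{As:fghgrowthcond} on the frozen coefficients, the integrand is controlled by $K_{fgh}(1+|Y_\Delta(\check{s})|^p)$ rather than $K_{fgh}(1+|Y_\Delta(s)|^p)$. Thus I arrive at an estimate of the form
\begin{equation*}
\E\big[|Y_\Delta(t)|^p\big]
\leq
|Y_0|^p + Ct + C\int_0^t \E\big[|Y_\Delta(\check{s})|^p\big]\diff{s}.
\end{equation*}
The main obstacle is therefore reconciling the delayed argument $\check{s}$ with a Gronwall-type closure. The clean way to handle this is to observe that, since $Y_\Delta(\check{s}) = Y_{\lfloor s/\Delta\rfloor\Delta}$ is piecewise constant and $\check{s}\leq s$, one has $\sup_{0\leq r\leq s}\E\big[|Y_\Delta(\check{r})|^p\big] \leq \sup_{0\leq r\leq s}\E\big[|Y_\Delta(r)|^p\big]$; defining $\psi(t) := \sup_{0\leq r\leq t}\E\big[|Y_\Delta(r)|^p\big]$, the right-hand side integral is then dominated by $C\int_0^t \psi(s)\diff{s}$. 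Taking the supremum over $t$ on the left (the bound holds uniformly because the It\^{o} estimate is monotone in the upper limit after the martingale terms vanish) yields $\psi(t)\leq Ce^{Ct} + C\int_0^t\psi(s)\diff{s}$, and the Gronwall inequality \cite[Theorem 1]{dragomir2003some} closes the argument with a bound independent of $\Delta$.

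**One technical point to verify carefully** is the finiteness of $\psi(t)$ a priori, so that the Gronwall step is legitimate rather than circular; this follows from a standard localization/stopping-time argument (stopping at the first exit time from a ball of radius $R$, bounding on the stopped process, then sending $R\to\infty$ by Fatou), which I would either invoke as routine or relegate to the observation that the one-step map of \eqref{eq:4.1} is explicit for $\theta=0$ and hence all moments are finite on each bounded interval. The crucial feature that makes the constant $C$ independent of $\Delta$ is that the Young-inequality constants and the growth constant $K_{fgh}$ depend only on $p$ and the data, never on the mesh, so the $\check{s}$-versus-$s$ substitution introduces no $\Delta$-dependence. I expect the whole proof to be nearly identical in structure to Lemma \ref{lm:Ytpexactbound}, with the frozen-coefficient bookkeeping being the only genuinely new ingredient.
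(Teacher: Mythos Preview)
Your proposal is correct and follows essentially the same route as the paper's proof: It\^{o} formula applied to $|Y_\Delta(t)|^p$ via \eqref{eq:4.2}, expectations to kill the martingale terms, Young's inequality and Assumption~\ref{As:fghgrowthcond}, then the sup-over-$[0,t]$ trick to absorb the frozen argument $\check{s}$ before closing with Gronwall. One small inaccuracy: the intermediate estimate you display should also carry a term $C\int_0^t \E\big[|Y_\Delta(s)|^p\big]\diff{s}$ (arising from the $|Y_\Delta(s)|^p$ half of Young's inequality on products such as $|Y_\Delta(s)|^{p-1}|f(\check{s},Y_\Delta(\check{s}))|$), exactly as in the paper, but this is handled identically by your $\psi(t)$ argument.
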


\begin{proof}
      Applying the It\^{o} formula shows that for any $t \geq 0$,
      \begin{align*}
		    |Y_{\Delta}(t)|^{p}
			=&~
			|Y(0)|^{p}
			+
			p\int_{0}^{t} |Y_\Delta(s)|^{p-2}
			\big\langle Y_\Delta(s), 
		    f(\check{s},Y_{\Delta}(\check{s}))
			\big\rangle\diff{s}
		    \\&~+
			\frac{p}{2}\int_0^t |Y_\Delta(s)|^{p-2}
			|g(\check{s},Y_{\Delta}(\check{s}))|^2 \diff{s}
			\\&~+
		    \frac{p(p-2)}{2}\int_{0}^{t}
			|Y_\Delta(s)|^{p-4}|Y_\Delta(s)^{\top}
		    g(\check{s},Y_{\Delta}(\check{s}))|^2 \diff{s}
			\\&~+
			p\int_{0}^{t} |Y_{\Delta}(s)|^{p-2}
		    \big\langle Y_\Delta(s),
			g(\check{s},Y_{\Delta}(\check{s}))
			\diff{W(s)}\big\rangle
			\\&~+
			\int_{0}^{t}\int_{|z|<c}\big(|Y_\Delta(s) 
			+ h(\check{s},Y_{\Delta}(\check{s}),z)|^{p}
			- |Y_{\Delta}(s)|^{p}\big)\,\tilde{N}(\dif{z},\dif{s})     
			\\&~+
			\int_0^t\int_{|z|<c} \big(|Y_\Delta(s) 
			+ h(\check{s},Y_{\Delta}(\check{s}),z)|^p-|Y_\Delta(s)|^p
			\\&~- p|Y_\Delta(s)|^{p-2}\big\langle Y_\Delta(s), 
            h(\check{s},Y_{\Delta}(\check{s}),z) \big\rangle\big)\,\nu(\dif{z})\dif{s}.
      \end{align*} 
      By taking expectations and using the Young inequality, we have
      \begin{align*}
	        \E\big[|Y_{\Delta}(t)|^{p}\big] 
			\leq&~
			|Y(0)|^{p}
			+
			p\E\bigg[\int_{0}^{t} |Y_{\Delta}(s)|^{p-1}
			|f(\check{s},Y_{\Delta}(\check{s}))|\diff{s}\bigg]
			\\&~+
			\frac{p(p-1)}{2} \E\bigg[\int_{0}^{t}
			|Y_{\Delta}(s)|^{p-2}
		    |g(\check{s},Y_{\Delta}(\check{s}))|^{2}
			\diff{s}\bigg]  
			\\&~+
			\E\bigg[\int_{0}^{t}\int_{|z|<c} |Y_{\Delta}(s) 
			+ h(\check{s},Y_{\Delta}(\check{s}),z)|^{p} \,\nu(\dif{z})\dif{s}\bigg]
		    \\&~+
			p\E\bigg[\int_0^t\int_{|z|<c}|Y_\Delta(s)|^{p-1} 
            |h(\check{s},Y_{\Delta}(\check{s}),z)|
			\,\nu(\dif{z})\dif{s}\bigg]
			\\\leq&~
			|Y(0)|^{p}
			+
			\E\bigg[\int_{0}^{t} (p-1)|Y_{\Delta}(s)|^{p}
			+
			|f(\check{s},Y_{\Delta}(\check{s}))|^{p}\diff{s}\bigg]
			\\&~+
			\frac{p-1}{2} \E\bigg[\int_{0}^{t}
			(p-2)|Y_{\Delta}(s)|^{p}
			+
			2|g(\check{s},Y_{\Delta}(\check{s}))|^{p}
			\diff{s}\bigg]  
			\\&~+
			2^{p-1}\E\bigg[\int_{0}^{t}\int_{|z|<c} 
			|Y_{\Delta}(s)|^{p} 
			+ 
			|h(\check{s},Y_{\Delta}(\check{s}),z)|^{p} \,\nu(\dif{z})\dif{s}\bigg]
			\\&~+
		    \E\bigg[\int_{0}^{t}\int_{|z|<c}
			(p-1)|Y_{\Delta}(s)|^{p}
			+
			|h(\check{s},Y_{\Delta}(\check{s}),z)|^{p}
			\,\nu(\dif{z})\dif{s}\bigg].
      \end{align*}
	  As a consequence of Assumption \ref{As:fghgrowthcond}, we obtain
	  \begin{align*}
			\E\big[|Y_{\Delta}(t)|^{p}\big] 
			\leq&~
			|Y(0)|^{p}
			+
			\bigg(\frac{p(p-1)}{2}
			+
			(2^{p-1} + p-1)\nu(|z|<c)\bigg) \int_{0}^{t}
			\E\big[|Y_{\Delta}(s)|^{p}\big]\diff{s}
			\\&~+
			\int_{0}^{t}\E\big[
			|f(\check{s},Y_{\Delta}(\check{s}))|^{p}
			\big]\diff{s}
			+
			(p-1)\int_{0}^{t} \E\big[
			|g(\check{s},Y_{\Delta}(\check{s}))|^{p}
			\big]\diff{s}  
			\\&~+
			\big(2^{p-1} + 1\big)\E\bigg[
			\int_{0}^{t}\int_{|z|<c} 
			|h(\check{s},Y_{\Delta}(\check{s}),z)|^{p} \,\nu(\dif{z})\dif{s}\bigg]
			\\\leq&~
			|Y(0)|^{p}
			+
			\bigg(\frac{p(p-1)}{2}
			+
			(2^{p-1} + p-1)\nu(|z|<c)\bigg) \int_{0}^{t}
			\E\big[|Y_{\Delta}(s)|^{p}\big]\diff{s}
			\\&~+
			C\int_{0}^{t}\big(1 + \E\big[
			|Y_{\Delta}(\check{s})|^{p}
			\big]\big)\diff{s}
			+
			(p-1)C\int_{0}^{t}\big(1 + \E\big[
			|Y_{\Delta}(\check{s})|^{p}
			\big]\big)\diff{s} 
			\\&~+
			\big(2^{p-1} + 1\big)C
			\int_{0}^{t}\big(1 + \E\big[
			|Y_{\Delta}(\check{s})|^{p}
			\big]\big)\diff{s}
			\\\leq&~
			|Y(0)|^{p} + Ct
			+
			C\int_{0}^{t}
			\E\big[|Y_{\Delta}(s)|^{p}\big]\diff{s}
			+
			C\int_{0}^{t}\E\big[
			|Y_{\Delta}(\check{s})|^{p}
			\big]\diff{s}.
      \end{align*}
      It follows that
      \begin{align*}
			\sup_{r \in [0,t]}
			\E\big[|Y_{\Delta}(r)|^{p}\big] 
			\leq&~
			Ce^{Ct}
			+
			C\int_{0}^{t} \sup_{r \in [0,s]}
			\E\big[|Y_{\Delta}(r)|^{p}\big]\diff{s}.
      \end{align*}
      By the Gronwall inequality, we obtain the required result and complete the proof.
\end{proof}

The following lemma estimates the weak error of $\{Y(t)\}_{t \geq 0}$ with respect to different times. For this purpose, let $C_{p}^{k}(\R^{d};\R)$ be the set of $k \in \{1,2,\cdots\}$ times continuously differentiable functions from $\R^{d}$ to $\R$ which, together with their partial derivatives of order up to $k$, have at most polynomial growth.
\begin{lemma}\label{lemma 2.5}
      Suppose that Assumptions \ref{As:(2.1)} and \ref{As:fghgrowthcond} hold and let $\Phi \in C_{p}^{2}(\R^{d};\R)$. Then for any $s,t \geq 0$ with $0 \leq t-s \leq \Delta$, there exists $C > 0$, independent of $\Delta$ and $t$, such that
      \begin{align*}
			\big|\E\big[\Phi(Y(t)) - \Phi(Y(s))\big]\big| 
			\leq 
			\Delta Ce^{Ct}, \quad t \geq 0.
      \end{align*}
\end{lemma}

\begin{proof}
      By the It\^{o} formula, we have
      \begin{align*}
			&~\Phi(Y(t)) - \Phi(Y(s))
			\\=&~
			\int_{s}^{t} \Phi^{'}(Y(r))f(r,Y(r))
			+
			\frac{1}{2}  \tr\big(g(r,Y(r))^{\top}
			\Phi^{''}\big(Y(r)\big)
			g(r,Y(r))\big) \diff{r}
			\\&~+
			\int_{s}^{t} \Phi^{'}(Y(r))g(r,Y(r)) \diff{W(r)}
			+
			\int_{s}^{t} \int_{|z|<c}
			\Phi\big(Y(r) + h(r,Y(r),z)\big) 
			- \Phi\big(Y(r)\big)
			\,\tilde{N}(\dif{z},\dif{r})
			\\&~+
			\int_{s}^{t}\int_{|z|<c}
			\Phi\big(Y(r) + h(r,Y(r),z)\big) 
			- \Phi\big(Y(r)\big)
			- \Phi^{'}\big(Y(r)\big)h(r,Y(r),z)) 
			\,\nu(\dif{z})\dif{r}.
      \end{align*}
      Taking expectations and utilizing mean-value formula lead to
      \begin{align*}
			\big|\E\big[\Phi(Y(t)) - \Phi(Y(s))\big]\big|
			\leq&~
			\E\bigg[\int_{s}^{t} 
			\big|\Phi^{'}(Y(r))\big|
			\big|f(r,Y(r))\big|
			\\&~+
			\frac{1}{2} \big|\tr\big(g(r,Y(r))^{\top}
			\Phi^{''}\big(Y(r)\big)
			g(r,Y(r))\big)\big| \diff{r}\bigg]
			\\&~+
			\E\bigg[\int_{s}^{t}\int_{|z|<c}
			\big|\Phi^{'}\big(Y(r) 
            + \kappa h(r,Y(r),z)\big)\big| 
			\big| h(r,Y(r),z))\big|
			\\&~+
			\big|\Phi^{'}\big(Y(r)\big)h(r,Y(r),z))\big| 
			\,\nu(\dif{z})\dif{r}\bigg]
      \end{align*} 
      for $\kappa \in (0,1)$. Owing to Assumption \ref{As:(2.1)} and $\Phi \in C_{p}^{2}(\R^{d};\R)$, there exists a polynomial $\Psi \colon \R^{+} \to \R^{+}$ such that
      \begin{align*}
			\big|\E\big[\Phi(Y(t)) - \Phi(Y(s))\big]\big|
			\leq&~
			C\E\bigg[\int_{s}^{t} \Psi(|Y(r)|)
			\Big(\big|f(r,Y(r))\big|
			+
			\big|g(r,Y(r))\big|^{2}
			\\&~+ 
			\int_{|z|<c}
			\big| h(r,Y(r),z))\big|
			\,\nu(\dif{z}) \Big)\dif{r}\bigg]
			\\\leq&~
			C\int_{s}^{t} \E\big[\Psi(|Y(r)|)
			\big(1 + |Y(r)|^{2}\big)\big]\diff{r}
			\\\leq&~
			C\int_{s}^{t} 
			\big(\E\big[\Psi(|Y(r)|)^{2}\big]
			\big)^{\frac{1}{2}}
			\big(1 + \E\big[|Y(r)|^{4}\big]
			\big)^{\frac{1}{2}}\diff{r}
			\\\leq&~
			\int_{s}^{t} Ce^{Cr} \diff{r}
			\leq
			\Delta Ce^{Ct},
      \end{align*}
      where the H\"{o}lder inequality and Lemma \ref{lm:Ytpexactbound}, have been used.
\end{proof}

With the above preparations, we are in a position to reveal the weak error estimate of Euler--Maruyama method on the infinite time interval $\R^{+}$. 
\begin{lemma}\label{lm:2.6}
      Assume that all components of $f(t,\cdot), g(t,\cdot), h(t,\cdot,z)$ belong to $C_{p}^{4}(\R^{d};\R)$ for all $t \geq 0$ and $z \in \R \backslash \{0\}$. Suppose that Assumptions \ref{As:(2.1)} and \ref{As:fghgrowthcond} hold and let $\Phi \in C_{p}^{4}(\R^{d};\R)$. Then there exists $C > 0$, independent of $\Delta$ and $t$, such that
      \begin{equation*}
			\big|\E\big[\Phi(Y(t))
			- \Phi(Y_\Delta(t))\big]\big| 
			\leq 
			\Delta Ce^{Ct}, \quad t \geq 0. 
	  \end{equation*}
\end{lemma}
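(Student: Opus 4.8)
The plan is to implement the Talay--Tubaro strategy through the Kolmogorov backward partial integro differential equation (PIDE), while tracking every constant in the exponential form $Ce^{Ct}$. Fix the terminal time $t$ and introduce the value function $u(s,x) := \E[\Phi(Y^{s,x}(t))]$, where $\{Y^{s,x}(r)\}_{r \in [s,t]}$ is the solution of \eqref{eq:SDE} started from $x$ at time $s$. Under the $C_{p}^{4}$ hypotheses on the coefficients and on $\Phi$, together with Assumptions \ref{As:(2.1)} and \ref{As:fghgrowthcond}, standard differentiability theory for jump SDEs yields $u \in C^{1,4}$ and the PIDE
\begin{equation*}
\partial_s u(s,x) + \mathcal{A}_s u(s,x) = 0, \quad u(t,x) = \Phi(x),
\end{equation*}
where the generator associated with \eqref{eq:SDE} is
\begin{align*}
\mathcal{A}_s \varphi(x)
=&~
\big\langle \nabla\varphi(x), f(s,x) \big\rangle
+
\tfrac{1}{2}\tr\big(g(s,x)^{\top}\nabla^2\varphi(x)\,g(s,x)\big)
\\&~+
\int_{|z|<c}\big(\varphi(x + h(s,x,z)) - \varphi(x) - \big\langle \nabla\varphi(x), h(s,x,z) \big\rangle\big)\,\nu(\dif{z}).
\end{align*}
The quantitative input I would establish first is that the spatial derivatives of $u$ up to order four have polynomial growth in $x$ with prefactor bounded by $Ce^{C(t-s)}$; this infinite-horizon refinement of the usual finite-interval regularity is exactly what will generate the factor $e^{Ct}$.

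Next I would telescope the global weak error. Since $u(0,x_0) = \E[\Phi(Y(t))]$ and $u(t,Y_\Delta(t)) = \Phi(Y_\Delta(t))$, we have
\begin{equation*}
\E[\Phi(Y(t))] - \E[\Phi(Y_\Delta(t))]
=
\E[u(0,Y_\Delta(0))] - \E[u(t,Y_\Delta(t))],
\end{equation*}
which splits into a sum over the grid points $t_0,t_1,\dots$ in $[0,t]$ plus a final partial interval ending at $t$. On each subinterval I would apply the It\^{o} formula to $r \mapsto u(r,Y_\Delta(r))$, using that $Y_\Delta$ solves \eqref{eq:4.2} with the frozen coefficients $f(\check r,Y_\Delta(\check r))$, $g(\check r,Y_\Delta(\check r))$ and $h(\check r,Y_\Delta(\check r),z)$. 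The martingale parts vanish in expectation, and because $\partial_s u + \mathcal{A}_s u = 0$ the local error over $[t_k,t_{k+1}]$ reduces to
\begin{equation*}
\E\Big[\int_{t_k}^{t_{k+1}} \big(\widetilde{\mathcal{A}}_r - \mathcal{A}_r\big)u(r,Y_\Delta(r))\,\dif{r}\Big],
\end{equation*}
where $\widetilde{\mathcal{A}}_r$ is the generator in which $f,g,h$ are evaluated at $(\check r,Y_\Delta(\check r))$ instead of $(r,Y_\Delta(r))$.

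To extract one extra power of $\Delta$ per step I would use that
\begin{equation*}
\Theta(r) := \E\big[\big(\widetilde{\mathcal{A}}_r - \mathcal{A}_r\big)u(r,Y_\Delta(r))\big]
\end{equation*}
vanishes at each grid point $r = t_k$, because there $\check r = r$ and $Y_\Delta(\check r) = Y_\Delta(r)$, so the frozen and true generators coincide. Writing $\Theta(r) = \Theta(r) - \Theta(t_k) = \int_{t_k}^{r}\Theta'(\rho)\,\dif{\rho}$ and applying the It\^{o} formula a second time to the smooth functional inside $\Theta$ renders each local error of order $\Delta^2$. This second expansion is where the fourth-order smoothness of the coefficients and of $\Phi$ is consumed (two It\^{o} applications cost four derivatives), where the moment bounds of Lemmas \ref{lm:Ytpexactbound} and \ref{lemma 2.4} and the short-time estimate of Lemma \ref{lemma 2.5} control the frozen-versus-current discrepancies, and where the derivative growth $Ce^{C(t-r)}$ of $u$ multiplies the moment growth $Ce^{Cr}$ to give a per-step bound $\Delta^2 Ce^{Ct}$. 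Summing the at most $t/\Delta + 1$ contributions, absorbing the short final interval into the same estimate and using $t \leq C^{-1}e^{Ct}$ yields the claimed bound $\Delta Ce^{Ct}$.

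The main obstacle is the regularity step: proving that $u$ and its spatial derivatives up to order four obey polynomial-in-$x$ bounds with the sharp exponential prefactor $Ce^{C(t-s)}$ on the \emph{unbounded} time interval, rather than the horizon-dependent constants supplied by finite-interval theory, and then propagating this control cleanly through both It\^{o} expansions without degrading the order after summing $\mathcal{O}(t/\Delta)$ local errors. The jump contributions require, in addition, Taylor-with-integral-remainder estimates made summable by the $\nu$-integrability in Assumptions \ref{As:(2.1)} and \ref{As:fghgrowthcond}, while the temporal part of the freezing error $f(\check r,\cdot) - f(r,\cdot)$ (and likewise for $g,h$) is handled through the time-regularity of the coefficients together with Lemma \ref{lemma 2.5}.
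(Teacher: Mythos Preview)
Your proposal is correct and follows essentially the same route as the paper: introduce $u(s,x)=\E[\Phi(Y^{s,x}(t))]$, use the Kolmogorov backward PIDE to rewrite the weak error as $\E[u(t,Y_\Delta(t))-u(0,Y_\Delta(0))]$, apply the It\^{o} formula once to get the integrand $(\widetilde{\mathcal{A}}_r-\mathcal{A}_r)u(r,Y_\Delta(r))$, and then apply It\^{o} a second time on each $[\check r,r]$ to gain the extra factor $\Delta$ before invoking Lemma~\ref{lemma 2.4}. The only cosmetic differences are that the paper writes the telescoping as a single integral $\int_0^t\int_{\check s}^{s}\cdots\,\dif r\,\dif s$ rather than a sum over steps, and that it outsources the $C_p^4$ regularity of $u$ to \cite[Lemma~12.3.1]{platen2010numerical} instead of deriving the exponential-in-horizon derivative bounds you flag as the main obstacle; note also that Lemma~\ref{lemma 2.5} concerns the exact process $Y$, so in your argument the ``short-time'' control of the frozen-versus-current discrepancies for $Y_\Delta$ is produced by the second It\^{o} expansion itself rather than by citing that lemma.
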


\begin{proof}
      Fix $t > 0$ and define $u(s,x) := \E\big[\Phi(Y(t)) \,|\, Y(s) = x\big]$ for $s \in [0,t]$ and $x \in \R^{d}$. According to \cite[Lemma 12.3.1]{platen2010numerical}, $u(s,x)$ is the unique solution of the Kolmogorov backward partial integro differential equations
      \begin{align}\label{eq:Kbpide}
            &~\partial_{s}u(s,x) + \Big((\partial_{x}u(s,x))f(s,x)
			+
			\frac{1}{2}\operatorname{trace}\big(g^{\top}(s,
            x)\partial_{x}^{2}u(s,x)g(s,x)\big)\Big)\diff{t}
			\\&~+
			\int_{|z|<c}u\big(s,x+h(s,x,z)\big) - u(s,x)
			- \partial_{x}u(s,x)h(s,x,z) \,\nu(\dif{z})
			=
			0 \notag
	  \end{align}
      for all $(s,x) \in (0,t) \times \R^{d}$ with the terminal condition $u(t,x) = \Phi(x), x \in \R^{d}$. Moreover, the function $x \mapsto u(s,x)$ belongs to $C_{p}^{4}(\R^{d};\R)$ for each $s \in [0,t]$ (see \cite[Lemma 12.3.1]{platen2010numerical}). Together with the It\^{o} formula, one has 
      \begin{align*}
			\diff{u(s,Y(s))}
			=&~
			(\partial_{x}u(s,Y(s))) g(s,Y(s)) \diff{W(s)}
			\\&~+ 
			\int_{|z|<c}u\big(s,Y(s)+h(s,Y(s),z)\big)
			- u(s,Y(s))\,\tilde{N}(\dif{z},\dif{s}),
            \quad s \in (0,T].
      \end{align*}
      It follows that $\E\big[u(0,Y(0))\big] = \E\big[u(t,Y(t))\big] = \E\big[\Phi(Y(t))\big]$, which implies
      \begin{align*}
			\big|\E\big[\Phi(Y(t))
			- \Phi(Y_{\Delta}(t))\big]\big|
			=
			\big|\E\big[u(t,Y_{\Delta}(t)) 
			- u(t,Y(t))\big]\big|
			=
			\big|\E\big[u(t,Y_{\Delta}(t))
			- u(0,Y_{\Delta}(0))\big]\big|.
      \end{align*}
      Based on the It\^{o} formula and \eqref{eq:Kbpide}, we obtain
      \begin{align*}
            &~\big|\E\big[u(t,Y_{\Delta}(t))
			- u(0,Y_{\Delta}(0))\big]\big|
            \\=&~
			\bigg|\E\bigg[\int_{0}^{t}
		    \partial_{s}u(s,Y_{\Delta}(s))
			+
			\big(\partial_{x}u(s,Y_{\Delta}(s))\big)
			f(\check{s},Y_{\Delta}(\check{s}))
			\\&~+
			\frac{1}{2}\operatorname{trace}
            \big(g^{\top}(\check{s},Y_{\Delta}(\check{s}))
			(\partial_{x}^{2}u(s,Y_\Delta(s)))
            g(\check{s},Y_{\Delta}(\check{s}))\big)
			\\&~+
			\int_{|z| < c} u\big(s,Y_{\Delta}(s) 
            + h(\check{s},Y_{\Delta}(\check{s}),z)\big)
			- u(s,Y_{\Delta}(s))
			\\&~-
			\big(\partial_{x}u(s,Y_{\Delta}(s))\big) 
            h(\check{s},Y_{\Delta}(\check{s}),z)
			\,\nu(\dif{z})\dif{s}\bigg]\bigg|
			\\\leq&~
			I_{1} + I_{2} + I_{3} + I_{4}
      \end{align*}
      with
      \begin{align*}
			&I_{1}
			:=
			\int_{0}^{t} \big|\E\big[
			\big(\partial_{x}u(s,Y_{\Delta}(s))\big)\big(f(s,Y_{\Delta}(s))
			- f(\check{s},Y_{\Delta}(\check{s}))\big)\big]\big|\diff{s},
			\\&
			I_{2}
			:=
	        \int_{0}^{t}\big|\E\big[\big(\partial_{x}^{2} 
            u(s,Y_{\Delta}(s))\big)\big(|g(s,Y_\Delta(s))|^{2}
		    - |g(\tau_s,Y_\Delta(\tau_s))|^{2}\big) \big]\big|\diff{s},
			\\&
			I_{3}
			:=
			\int_{0}^{t} \bigg|\E\bigg[
			\partial_{x}u(s,Y_\Delta(s))\bigg(u(s,Y_\Delta(s))
			\int_{|z|<c} h(s,Y_{\Delta}(s),z)
			- h(\check{s},Y_{\Delta}(\check{s}),z)
			\,\nu(\dif{z})\bigg)\bigg]\bigg| \diff{s},
			\\&
			I_{4}
			:=
			\int_{0}^{t} \bigg|\E\bigg[ \int_{|z|<c}
			u\big(s,Y_{\Delta}(s) 
			+ h(\check{s},Y_{\Delta}(\check{s}),z)\big)
			- u\big(s,Y_{\Delta}(s) + h(s,Y_{\Delta}(s),z)\big) 
			\,\nu(\dif{z})\bigg]\bigg|\diff{s}.
      \end{align*}
      Let us estimate $I_{1}$, $I_{2}$, $I_{3}$ and $I_{4}$ one by one. we split $I_{1}$ as follows
      \begin{align*}
			I_{1}
			\leq&~
			\int_{0}^{t} 
			\big|\E\big[\big(\partial_{x}u(s,Y_{\Delta}(s))\big)
			f(s,Y_{\Delta}(s)))          
			-
			\big(\partial_{x}u(\check{s},Y_{\Delta}(\check{s}))\big)
			f(\check{s},Y_{\Delta}(\check{s})) \big]\big|\diff{s}
			\\&~+
			\int_{0}^{t}
			\big|\E\big[\big(\partial_{x}u(s,Y_{\Delta}(s))
			-
			\partial_{x}u(\check{s},Y_{\Delta}(\check{s}))\big)
            f(\check{s},Y_{\Delta}(\check{s}))\big]\big|\diff{s}
			\\=:&~
			I_{11} + I_{12}.
      \end{align*}
      Letting $\varphi(s,x) := (\partial_{x}u(s,x))f(s,x), s \in (0,t), x \in \R^{d}$  
	  and using the It\^{o} formula enable us to get 
      \begin{align*}
			I_{11}
			\leq&~
			\int_{0}^{t}
			\int_{\check{s}}^{s}\E\bigg[
			\Big|\partial_{r}\varphi(r,Y_\Delta(r))
			+
			\big(\partial_{x}\varphi(r,Y_\Delta(r))\big)
			f(\check{r},Y_\Delta(\check{r}))
			\\&~+
			\frac{1}{2}\big(\partial_{x}^{2}
            \varphi(r,Y_{\Delta}(r))\big)|
			g(\check{r},Y_\Delta(\check{r}))|^2
			\\&~+
			\int_{|z|<c}\varphi\big(r,Y_\Delta(r)
			+
			h(\check{r},Y_\Delta(\check{r}),z)\big)
            - \varphi(r,Y_\Delta(r))
			\\&~~~~
            - \big(\partial_{x}\varphi(r,Y_{\Delta}(r)\big)
            h(\check{r},Y_{\Delta}(\check{r}),z)
            \,\nu(\dif{z})\Big|\bigg]\diff{r}\dif{s}
	        \\\leq&~
			\int_0^t\int_{\check{s}}^{s}\E\bigg[
			|\partial_{r}\varphi(r,Y_\Delta(r))|
			+
			|\partial_{x}\varphi(r,Y_\Delta(r))|^{2}
			\\&~+
			|f(\check{r},Y_\Delta(\check{r}))|^{2}
			+
            |\partial_{x}^2\varphi(r,Y_\Delta(r))|^{2}
			+
            |g(\check{r},Y_\Delta(\check{r}))|^{4}
			\\&~
            +
            \int_{|z|<c}|\varphi\big(r,Y_\Delta(r)
            +
            h(\check{r},Y_\Delta(\check{r}),z)\big)|
            +
            |\varphi(r,Y_\Delta(r))|
			\\&~
            +
            |\partial_{x}\varphi(r,Y_\Delta(r)|^{2}
			+
			|h(\check{r},Y_\Delta(\check{r}),z)|^{2}
			\,\nu(\dif{z})\bigg]\diff{r}\dif{s}.
	  \end{align*}
      In view of Assumption \ref{As:fghgrowthcond}, there exist polynomials $P_{1},P_{2},P_{3}$ and $P_{4},P_{5}$ such that 
      \begin{align*}
			I_{11}
			\leq&~
			\int_{0}^{t}\int_{\check{s}}^{s}
            \E\bigg[P_{1}(|Y_\Delta(r)|)
            +
            P_{2}(|Y_\Delta(\check{r})|)
			\\&~+ 
            \int_{|z|<c}P_{3}(|Y_\Delta(r)
			+
			h(\check{r},Y_\Delta(\check{r}),z)|)
            \,\nu(\dif{z})\bigg]\diff{r}\dif{s}
			\\\leq&~ 
			\int_{0}^{t}\int_{\check{s}}^{s}
			\E\big[P_{4}(|Y_\Delta(r)|)
            +
            P_{5}(|Y_\Delta(\check{r})|)\big]\diff{r}\dif{s},
      \end{align*}
      where the elementary inequality $|\sum_{i=1}^{n}u_i|^p \leq n^{p-1}\sum_{i=1}^{n} |u_{i}|^{p}, p \geq 1, u_{i} \in \R, i = 1,2, \cdots,n$ has been used. Then Lemma \ref{lemma 2.4} implies 
      \begin{align*}
			I_{11}
			\leq
			\int_{0}^{t}\int_{\check{s}}^{s} Ce^{Cr} \diff{r}\dif{s}
			\leq
			\int_{0}^{t} \Delta Ce^{Cs} \diff{s}
			\leq
			\Delta Ce^{Ct}.
      \end{align*} 
      Similarly, we obtain $I_{12} \leq \Delta Ce^{Ct}$, and consequently
      \begin{align*}
			I_{1}
			\leq
			I_{11}+I_{12}
			\leq
			\Delta Ce^{Ct}.
      \end{align*}
      In the same way, one can derive that
      \begin{align*}
			I_{2} \leq \Delta Ce^{Ct},
            \quad
			I_{3} \leq \Delta Ce^{Ct}.
	  \end{align*}
      Concerning $I_4$, we have 
	  \begin{align*}
			I_{4}
			\leq&~
			\int_{0}^{t}\bigg|\E\bigg[\int_{|z|<c}
			u\big(s,Y_{\Delta}(s)
			+
			h(\check{s},Y_\Delta(\check{s}),z)\big)
			\\&~-
		    u\big(\check{s},Y_\Delta(\check{s})
		    +   
			h(\check{s},Y_\Delta(\check{s}),z)\big)
            \,\nu(\dif{z})\bigg]\bigg|\diff{s}
			\\&~~+
			\int_{0}^{t}\bigg|\E\bigg[\int_{|z|<c}
			u\big(s,Y_{\Delta}(s)
			+
			h(s,Y_{\Delta}(s),z)\big)
			\\&~-
			u\big(\check{s},Y_{\Delta}(\check{s})
			+
            h(\check{s},Y_{\Delta}(\check{s}),z)\big)
            \,\nu(\dif{z})\bigg]\bigg|\diff{s}
			\\=:&~
			I_{41}+I_{42}.
	  \end{align*}
      Denote $$\phi_{n}(s,x) := \int_{|z|<c} u\big(s,x+h(t_n,Y_n,z)\big) \,\nu(\dif{z}), \quad s \in [t_n,t_{n+1}], x \in \R^{d}.$$ It follows from the It\^{o} formula that for any $s \in [t_n,t_{n+1}]$, \begin{align*}
            &~\E\big[\phi_{n}(s,Y_\Delta(s))
            - \phi_{n}(t_n,Y_\Delta(t_n))\big]
			\\\leq&~
			\int_{t_n}^s\E\bigg[\Big|\partial_{r}
		    \phi_{n}(r,Y_\Delta(r))
		    +
		    \big(\partial_{x}\phi_{n}(r,Y_\Delta(r))\big)
			f(\check{r},Y_\Delta(\check{r}))
			\\&~+
			\frac{1}{2}\big(\partial_{x}^{2}
            \phi_{n}(r,Y_\Delta(r))\big)
            |g(\check{r},Y_\Delta(\check{r}))|^{2}
			\\&~+
			\int_{|z|<c}\phi_{n}\big(r,Y_\Delta(r)
			+ 
			h(\check{r},Y_{\Delta}(\check{r}),z)\big)
            - \phi_{n}(r,Y_{\Delta}(r))
			\\&~-
			\big(\partial_{x}\phi_{n}(r,Y_\Delta(r))\big)
			h(\check{r},Y_\Delta(\check{r}),z)
			\,\nu(\dif{z})\Big|\bigg]\diff{r}.
      \end{align*}
      Similar to the arguments for $I_{11}$, one can utilize Assumption \ref{As:fghgrowthcond} and Lemma \ref{lemma 2.4} to derive that 
      \begin{align*}
			\E\big[\phi_{n}(s,Y_\Delta(s)) - \phi_{n}(t_n,Y_\Delta(t_n))\big]
			\leq
			\int_{t_n}^s Ce^{Cr}\diff{r}
			\leq
			\Delta Ce^{Cs}, \quad s \in [t_n,t_{n+1}],
      \end{align*}
      which implies
      \begin{align*} 
            I_{41}
            =&~
            \sum_{n=0}^{\frac{\check{t}}{\Delta}-1}
            \int_{t_{n}}^{t_{n+1}}
            \bigg|\E\bigg[\int_{|z|<c} u\big(s,Y_\Delta(s)
            + h(t_{n},Y_\Delta(t_{n}),z)\big)
            \\&~- 
            u\big(t_{n},Y_\Delta(t_{n}) 
            + h(t_{n},Y_\Delta(t_{n}),z)\big)
            \,\nu(\dif{z})\bigg]\bigg|\diff{s}
			\\&~+
			\int_{\check{t}}^{t}\bigg|\E\bigg[\int_{|z|<c}
            u\big(s,Y_\Delta(s) 
            + h(\check{t},Y_\Delta(\check{t}),z)\big)
			\\&~-
			u\big(\check{t},Y_\Delta(\check{t})
			+
            h(\check{t},Y_\Delta(\check{t}),z)\big)
            \,\nu(\dif{z})\bigg]\bigg|\diff{s}
            \\=&~
            \sum_{n=0}^{\frac{\check{t}}{\Delta}-1}
            \int_{t_{n}}^{t_{n+1}}
            \big|\E\big[\phi_{n}(s,Y_\Delta(s))
            -\phi_{n}(t_n,Y_\Delta(t_n))\big]\big|\diff{s}
            \\&~+
            \int_{\check{t}}^{t}
            \Big|\E\Big[\phi_{\frac{\check{t}}{\Delta}}(s,Y_\Delta(s))
            -\phi_{\frac{\check{t}}{\Delta}}
            (\check{t},Y_\Delta(\check{t}))\Big]\Big|\diff{s}
			\\\leq&~
            \sum_{n=0}^{\frac{\check{t}}{\Delta}-1}
            \int_{t_{n}}^{t_{n+1}} \Delta Ce^{Cs} \diff{s}
            +
            \int_{\check{t}}^{t} \Delta Ce^{Cs} \diff{s}
			\\\leq&~
			\Delta Ce^{Ct}.
      \end{align*}
      By defining $$\psi(s,x) := \int_{|z|<c} u\big(s,x+ h(s,x,z)\big) \,\nu(\dif{z}), \quad s \in [0,T], x \in \R^{d},$$ we have
      \begin{align*} 
            I_{42}
			=
			\int_{0}^{t}\big|\E\big[\psi(s,Y_\Delta(s))
            - \psi(\check{s},Y_{\Delta}(\check{s})\big]\big|\diff{s},
      \end{align*}
      Similar to processing $I_{11}$, it holds that $I_{42}\leq \Delta Ce^{Ct}$, which accordingly gives $I_4\leq I_{41}+I_{42}\leq\Delta Ce^{Ct}$.
      As a consequence of the previous estimations, we finally obtain
	  \begin{align*}
			|\E[\Phi(Y(t))-\Phi(Y_\Delta(t))]|
			\leq I_{1} + I_{2} + I_{3} + I_{4}
			\leq \Delta Ce^{Ct},
      \end{align*}
      where $C > 0$ is independent of $\Delta$ and $t$. 
\end{proof}


Similar to \eqref{eq:3.9}, we approximate $\{X(t)\}_{t \in [0,T]}$ by $\{X_{\Delta}(t)\}_{t \in [0,T]}$ as follows
\begin{equation}\label{eq:5.3}
      X_{\Delta}(t)
	  :=
      Y_{\Delta}(E_\Delta(t)), \quad t \in [0,T],
\end{equation}
where $\{Y_{\Delta}(t)\}_{t \geq 0}$ and $\{E_{\Delta}(t)\}_{t \geq 0}$ are given by \eqref{eq:4.2} and \eqref{eq:3.8}, respectively. Then the weak convergence order of $\{X_{\Delta}(t)\}_{t \in [0,T]}$ is demonstrated by the following theorem.

\begin{theorem}\label{th:weakorder}
      Assume that all components of $f(t,\cdot), g(t,\cdot), h(t,\cdot,z)$ belong to $C_{p}^{4}(\R^{d};\R)$ for all $t \geq 0$ and $z \in \R \backslash \{0\}$. Suppose that Assumptions \ref{As:(2.1)}, \ref{As:(2.3)} and \ref{As:fghgrowthcond} hold and let $\Phi \in C_{p}^{4}(\R^{d})$. Then there exists $C > 0$, independent of $\Delta$, such that
      \begin{align*}
			\big|\E\big[\Phi(X(T))-\Phi(X_\Delta(T))\big]\big|
			\leq
			C\Delta.    
      \end{align*}
\end{theorem}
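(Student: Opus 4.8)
The plan is to mirror the two-error decomposition used in the proof of Theorem~\ref{th:strongorder}, but now at the level of test functions rather than in mean square. By the duality principle (Lemma~\ref{le:2.5}(1)) together with uniqueness of solutions, $X(T) = Y(E(T))$, while $X_\Delta(T) = Y_\Delta(E_\Delta(T))$ by \eqref{eq:5.3}. Inserting the intermediate term $\Phi(Y(E_\Delta(T)))$, I would write
\begin{align*}
\big|\E\big[\Phi(X(T)) - \Phi(X_\Delta(T))\big]\big|
\leq&~
\big|\E\big[\Phi(Y(E(T))) - \Phi(Y(E_\Delta(T)))\big]\big|
\\&~+
\big|\E\big[\Phi(Y(E_\Delta(T))) - \Phi(Y_\Delta(E_\Delta(T)))\big]\big|,
\end{align*}
so that the first term captures the error from discretizing the inverse subordinator, evaluated along the exact flow $Y$, and the second captures the weak error of the Euler--Maruyama method, evaluated at the random grid point $E_\Delta(T)$.

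The crucial structural fact I would exploit is that $\{E(t)\}_{t\geq0}$ and $\{E_\Delta(t)\}_{t\geq0}$ are measurable with respect to the subordinator $\{D(t)\}_{t\geq0}$, which is independent of $\{W(t)\}_{t\geq0}$ and $\{N(\cdot,\cdot)\}$ and hence of both $\{Y(t)\}_{t\geq0}$ and $\{Y_\Delta(t)\}_{t\geq0}$. Consequently I would condition on $\sigma(D)$: for the first term, conditioning freezes the pair $(E(T),E_\Delta(T))$ at deterministic values $(a,b)$ satisfying $0\leq a-b\leq\Delta$ by \eqref{eq:EtEDeltaterror}, and the conditional expectation collapses to the deterministic weak increment $\E[\Phi(Y(a))-\Phi(Y(b))]$, to which Lemma~\ref{lemma 2.5} applies to give the bound $\Delta Ce^{Ca}$. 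Averaging over $D$ with $a=E(T)$ and invoking the exponential moment bound \eqref{eq:estimateEelambdaEt} then yields
$$\big|\E\big[\Phi(Y(E(T))) - \Phi(Y(E_\Delta(T)))\big]\big| \leq \Delta C\,\E\big[e^{CE(T)}\big] \leq \Delta Ce^{CT}.$$

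For the second term I would proceed identically: conditioning on $\sigma(D)$ freezes $E_\Delta(T)$ at a grid point $t_n$, and the conditional expectation reduces to $\E[\Phi(Y(t_n))-\Phi(Y_\Delta(t_n))]$, which is bounded by $\Delta Ce^{Ct_n}$ via Lemma~\ref{lm:2.6} (whose hypotheses, namely $f(t,\cdot),g(t,\cdot),h(t,\cdot,z),\Phi\in C_p^4$, are exactly those assumed here). Since $E_\Delta(T)\leq E(T)$ by \eqref{eq:EtEDeltaterror}, averaging over $D$ and using \eqref{eq:estimateEelambdaEt} once more gives $\Delta C\,\E[e^{CE_\Delta(T)}]\leq \Delta C\,\E[e^{CE(T)}]\leq\Delta Ce^{CT}$. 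Combining the two estimates and absorbing the fixed factor $e^{CT}$ into the constant completes the proof.

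The main obstacle I anticipate is the rigorous justification of the conditioning step: one must verify that $E(T)$ and $E_\Delta(T)$ are genuinely $\sigma(D)$-measurable and independent of the noise driving $Y$ and $Y_\Delta$, so that the deterministic-time weak error estimates of Lemmas~\ref{lemma 2.5} and~\ref{lm:2.6} may be evaluated at these random arguments and then integrated against the law of the subordinator. A secondary technical point is confirming that the exponential moment \eqref{eq:estimateEelambdaEt} dominates the $e^{Ca}$ growth produced by those lemmas uniformly in the randomness, since this is precisely what keeps the final constant independent of $\Delta$.
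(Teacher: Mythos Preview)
Your proposal is correct and matches the paper's own proof essentially step for step: the same two-term decomposition via $\Phi(Y(E_\Delta(T)))$, the same applications of Lemmas~\ref{lemma 2.5} and~\ref{lm:2.6}, and the same use of \eqref{eq:EtEDeltaterror} together with \eqref{eq:estimateEelambdaEt} to control the exponential factors. If anything, you are more explicit than the paper about the conditioning-on-$\sigma(D)$ argument, which the paper leaves implicit when it invokes the deterministic-time lemmas at the random times $E(T)$ and $E_\Delta(T)$.
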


\begin{proof}
      By \eqref{eq:5.3} and Lemma \ref{le:2.5}, we have
      \begin{align*}
			&~\big|\E\big[\Phi(X(T))
		    - \Phi(X_\Delta(T))\big]\big|
			=
			\big|\E\big[\Phi(Y(E(T))
			- \Phi(Y_\Delta(E_\Delta (T))\big]\big|
			\\\leq&~
			\big|\E\big[\Phi(Y(E(T)) - \Phi(Y(E_\Delta (T))\big]\big|
			+
			\big|\E\big[\Phi(Y(E_\Delta (T)) - \Phi(Y_\Delta(E_\Delta (T))\big]\big|.    
      \end{align*}
      Applying \eqref{eq:estimateEelambdaEt}, \eqref{eq:EtEDeltaterror} and Lemma \ref{lemma 2.5} leads to
      \begin{align*}
			\big|\E\big[\Phi(Y(E(T)) - \Phi(Y(E_\Delta (T))\big]\big|
			\leq
			\Delta C\E\big[e^{CE(T)}\big]
			\leq 
			\Delta Ce^{CT}.
      \end{align*}
      Owing to \eqref{eq:estimateEelambdaEt}, \eqref{eq:EtEDeltaterror} and Lemma \ref{lm:2.6}, one gets
      \begin{align*}
			\big|\E\big[\Phi(Y(E_{\Delta}(T))
			- \Phi(Y_{\Delta}(E_{\Delta}(T))\big]\big|
			\leq
			\Delta C\E\big[e^{CE_{\Delta}(T)}\big]
			\leq
			\Delta C\E\big[e^{CE(T)}\big]
			\leq 
			\Delta Ce^{CT}.
	  \end{align*}
	  It follows that the desired result holds.
\end{proof}

\section{Numerical experiments}
\label{sect:numexp}

This section provides some numerical experiments to illustrate the previous theoretical results. Consider the one-dimensional SDE
\begin{align}\label{eq:example}
      \diff{X(t)}
      =&~
	  \big(\sin(E(t)) + X(t-)\big)\diff{E(t)}
	  +
	  \big(E(t) + \sin(X(t-))\big)\diff{W(E(t))} \notag 
	  \\&~+
	  \int_{\R}X(t-)z
	  \,\tilde{N}(\dif{z},\dif{E(t)}),
	  \quad t\in(0,T]
\end{align}
with $X(0) = 1$, where $\{W(t)\}_{t \geq 0}$ is a $\R$-valued standard Brownian motion on the complete filtered probability space $(\Omega,\F,\P,\{\F_{t}\}_{t \geq 0})$. Let $\tilde{N}(\dif{z},\dif{t}) := N(\dif{z},\dif{t}) - \nu(\dif{z})\dif{t}$ be the compensator of $\{\F_{t}\}_{t \geq 0}$-adapted Poisson random measure $N(\dif{z},\dif{t})$, where $\nu$ is a L\'{e}vy measure with $\nu(\dif{z}) = \frac{2}{\sqrt{2\pi}} e^{-\frac{z^{2}}{2}}\dif{z}, z \in \R$\cite{wang2016numerical}. Moreover, $E(t) := \inf\big\{s \geq 0 : D(s) > t \big\}, t \geq 0,$
be the inverse of the $\alpha$-stable subordinator. Noting that \eqref{eq:example} satisfies Assumptions \ref{As:(2.1)}, \ref{As:fghholdercont} and \ref{As:fghgrowthcond}, Theorems \ref{th:strongorder} and \ref{th:weakorder} thus hold.

\begin{figure}[!htbp]
      \subfigure[$\alpha = 0.45$]
	  {\includegraphics[width=0.45\textwidth]
	  {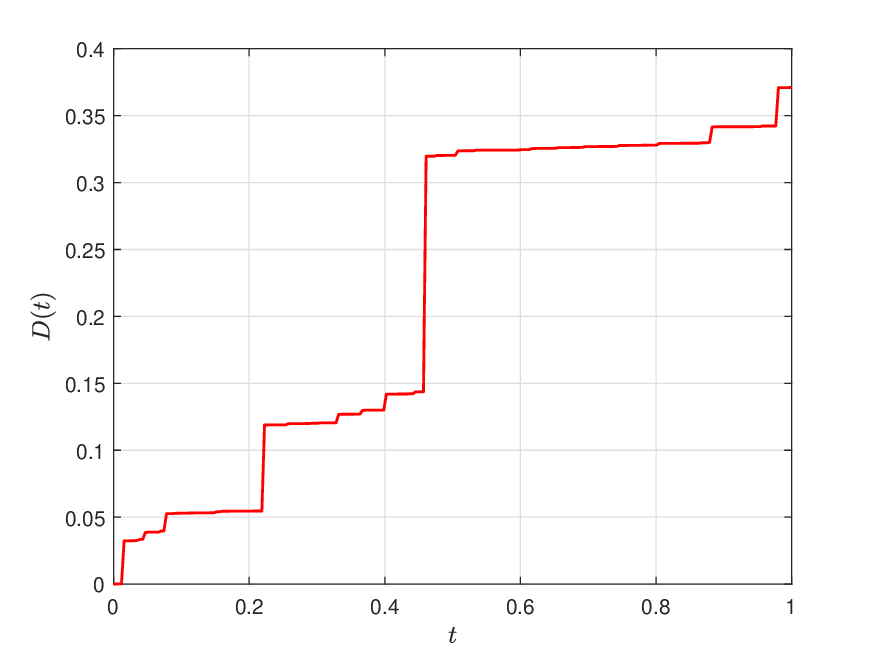}}
      \subfigure[$\alpha = 0.90$]
      {\includegraphics[width=0.45\textwidth]
      {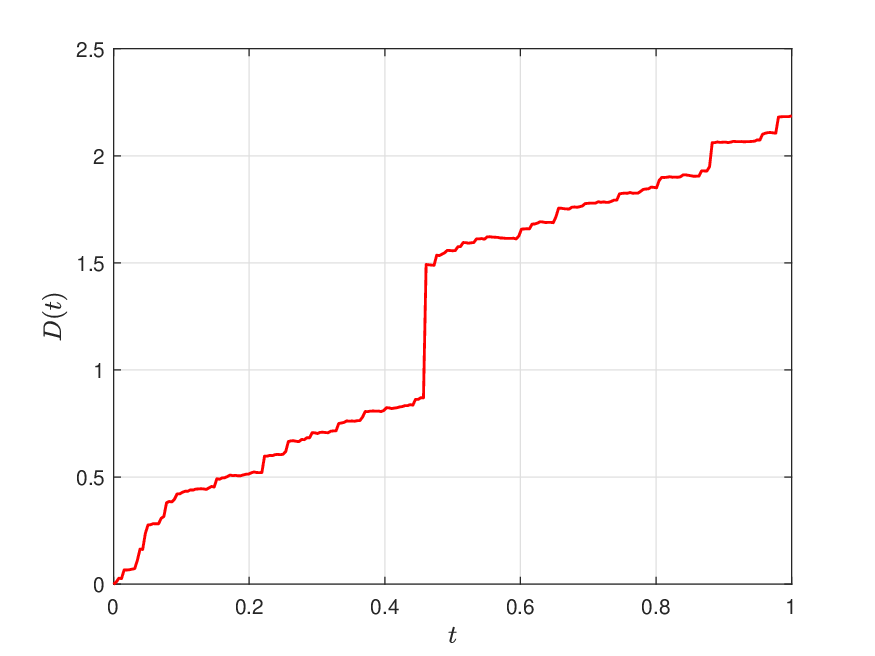}}
      \centering\caption {Sample paths for 
      $\{D(t)\}_{t \in [0,1]}$ with 
      $\Delta = 2^{-8}$}
      \label{fig:OneDpI1}
\end{figure}

Given $\Delta \in (0,1)$, the increments of $\alpha$--stable subordinator $\{D(t)\}_{t \geq 0}$ on each time interval $[n\Delta,(n+1)\Delta], n \in \N$ are independent random variables with the common distribution $S(\alpha,1,\Delta^{\frac{1}{\alpha}},0)$, where $S(\alpha,\beta,\gamma,\delta)$ denotes the stable distribution with parameters $(\alpha,\beta,\gamma,\delta)$ \cite{cont2004financial}. With the help of the Matlab command ``makedist" and ``random", we plot the smaple paths for $\{D(t)\}_{t \in [0,1]}$ with $\alpha = 0.45, 0.90$ and $\Delta = 2^{-8}$ in Figure \ref{fig:OneDpI1}. In view of \eqref{eq:3.8} and \eqref{eq:EDeltattn}, we follow the idea presented in \cite{jum2016strong} to find $N \in \N$ such that $T \in [D(N\Delta),D((N+1)\Delta))$. Based on
\begin{equation}\label{eq:525252}
	  E_{\Delta}(t)
	  =
	  \left\{\begin{aligned}
	  & n\Delta,  &t \in [D(n\Delta),D((n+1)\Delta)),
		n = 0,1,2,\cdots,N-1,
	  \\
	  & N\Delta,  &t \in [D(N\Delta),T],
      \end{aligned}\right.
\end{equation}
the sample paths of the inverse subordinator $\{E_{\Delta}(t)\}_{t \in [0,1]}$ is plotted in Figure \ref{fig:OneDpII} with $\Delta = 2^{-9}$.

\begin{figure}[!htbp]
      \subfigure[$\alpha = 0.45$]
      {\includegraphics[width=0.45\textwidth]
      {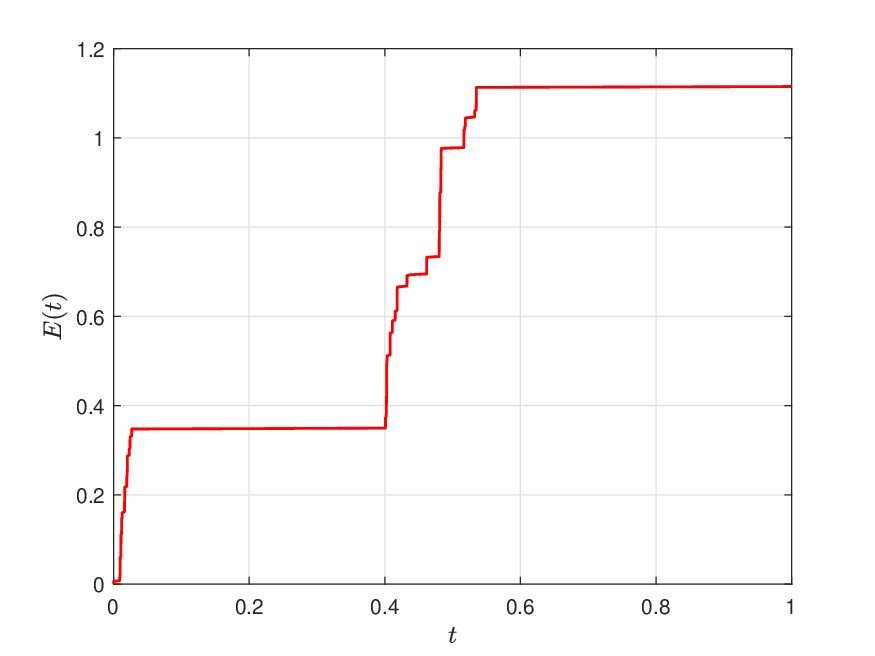}}
      \subfigure[$\alpha = 0.90$]
      {\includegraphics[width=0.45\textwidth]
      {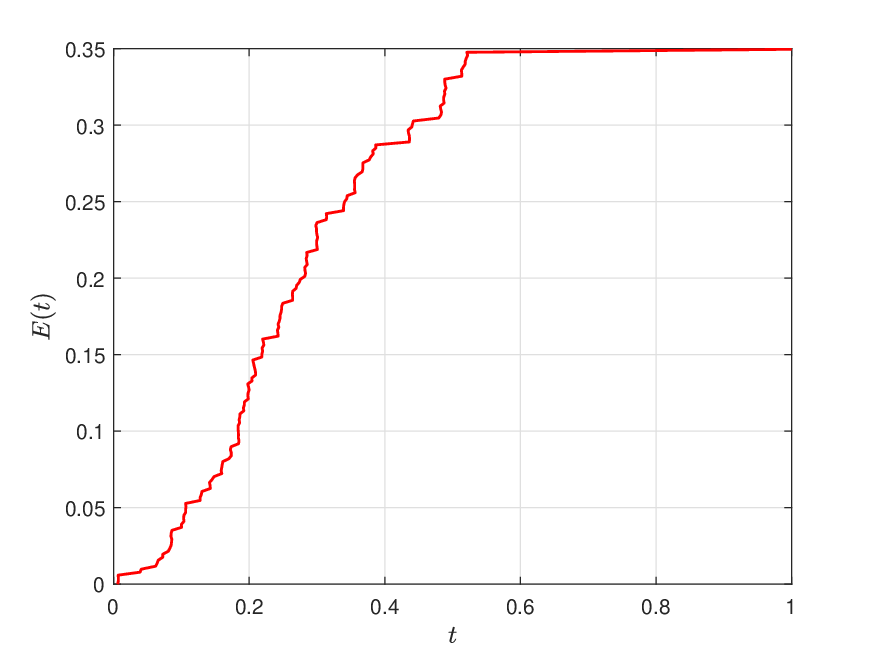}}
      \centering\caption{Sample paths for
      $\{E_ {\Delta}(t)\}_{t \in [0,1]}$
       with $\Delta = 2^{-9}$}
      \label{fig:OneDpII}
\end{figure}

As the stochastic $\theta$ method \eqref{eq:Stmethod} with $\theta \in (0,1]$ applied to the original SDE of \eqref{eq:example} is implicit, the Newton iterations with precision $10^{-5}$ are employed to solve the nonlinear equation arising from the implementation of the underlying method in every time step. Besides, combining \eqref{eq:3.2} and \eqref{eq:525252} leads to
\begin{equation*}
	X_{\Delta}(t)
	=
	\left\{\begin{aligned}
		& Y_{\Delta}(n\Delta) = Y_{n},
		& t \in [D(n\Delta),D((n+1)\Delta)),
		n = 0,1,\cdots, N-1,
		\\
		& Y_{\Delta}(N\Delta) = Y_{N},
		& t \in [D(N\Delta),T],
	\end{aligned}\right.
\end{equation*}
which helps us to plot the sample path of solution $\{ X_{\Delta}(t)\}_{t \in [0,1]}$ to \eqref{eq:example}; see Figure \ref{fig:Onesolutionpath}.

\begin{figure}[!htbp]
\begin{center}
      \subfigure[$\alpha = 0.45,\theta = 0$]
      {\includegraphics[width=0.45\textwidth]
      {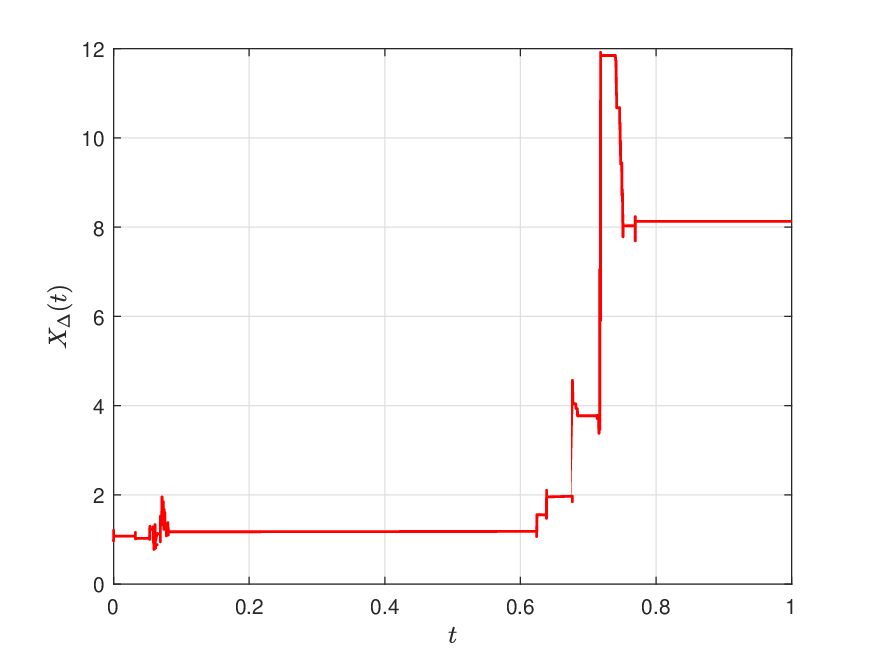}}
      \subfigure[$\alpha = 0.90,\theta = 0$]
      {\includegraphics[width=0.45\textwidth]
      {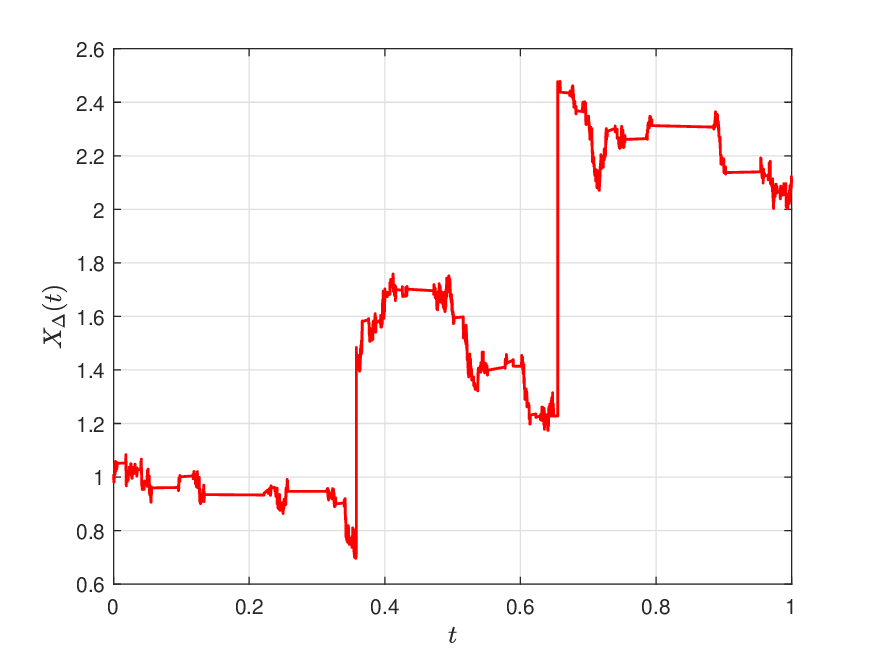}}
      \\
      \subfigure[$\alpha = 0.45,\theta = 0.5$]
      {\includegraphics[width=0.45\textwidth]
      {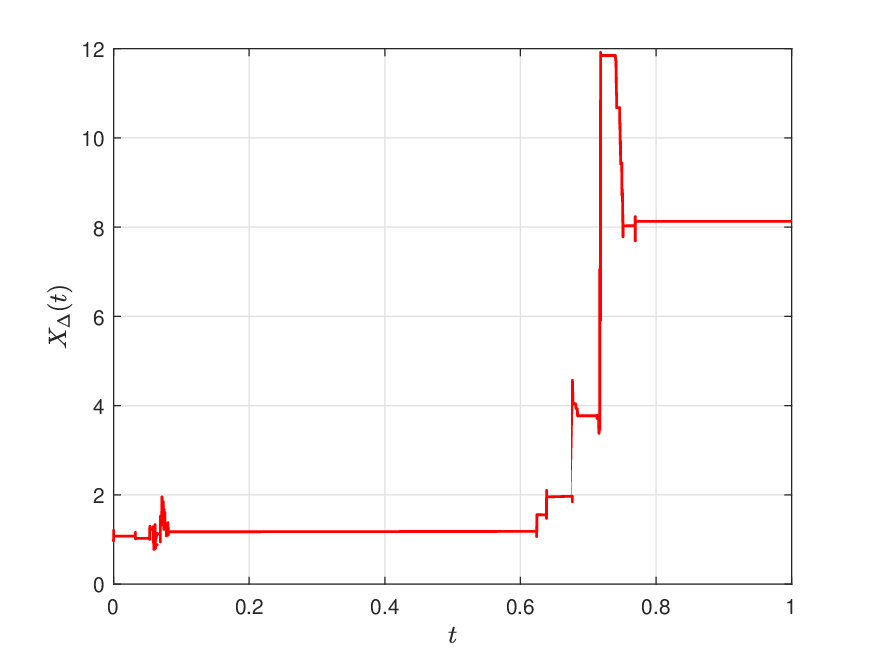}}
      \subfigure[$\alpha = 0.90,\theta = 0.5$]
      {\includegraphics[width=0.45\textwidth]
      {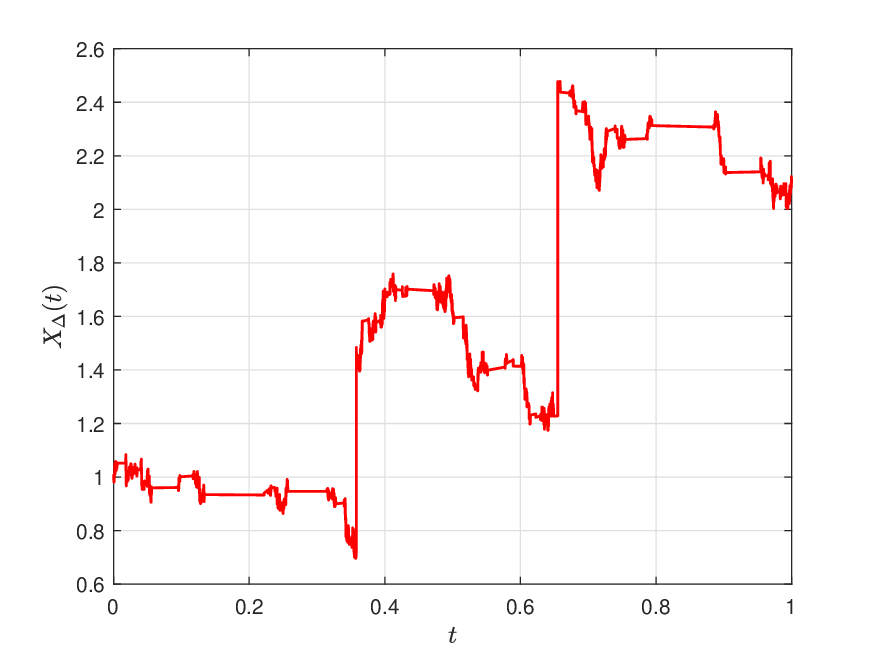}}
      \\
      \subfigure[$\alpha = 0.45,\theta = 1.0$]
      {\includegraphics[width=0.45\textwidth]
      {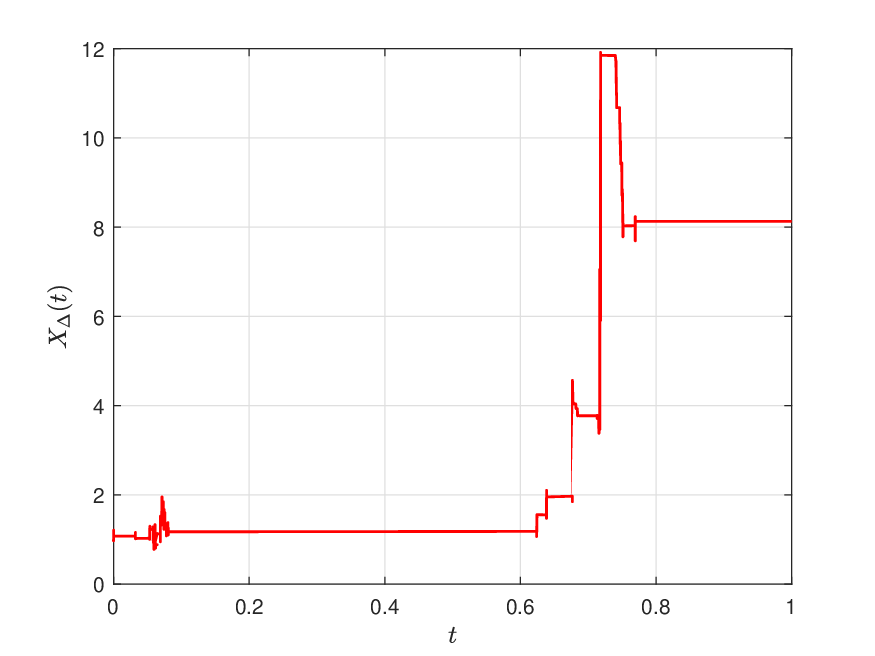}}
      \subfigure[$\alpha = 0.90,\theta = 1.0$]
      {\includegraphics[width=0.45\textwidth]
      {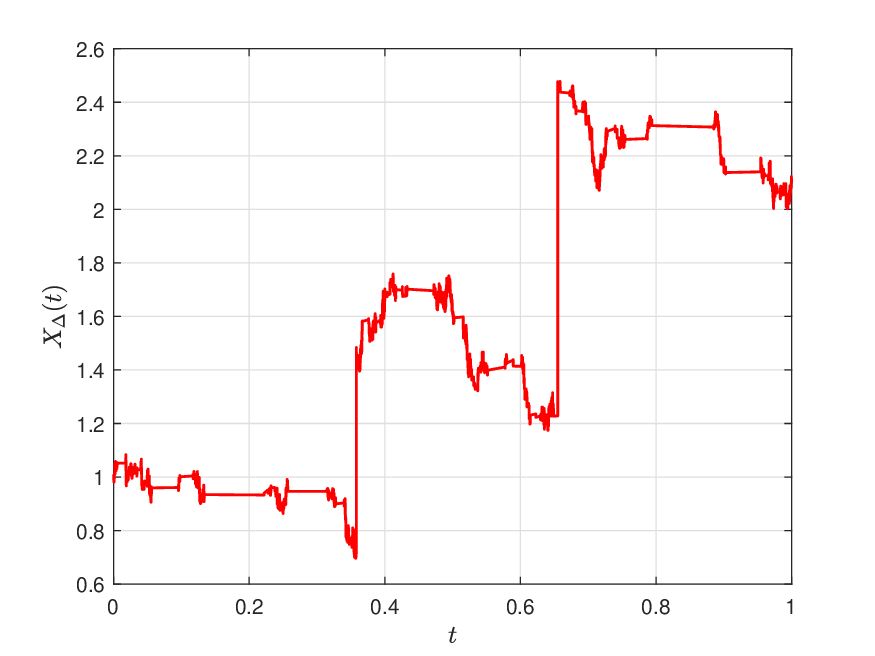}}
      \centering\caption{Sample paths for
      $\{X_{\Delta}(t)\}_{t \in [0,1]}$
      with $\Delta = 2^{-15}$}
      \label{fig:Onesolutionpath}
\end{center}
\end{figure}

To check the strong and weak convergence orders of the numerical approximations for \eqref{eq:example}, we identify the unavailable exact solution with a numerical approximation generated by the considered numerical method with a fine stepsize $\Delta = 2^{-16}$ for Theorem \ref{th:strongorder} and $\Delta = 2^{-12}$ for Theorem \ref{th:weakorder}, respectively. Combining the numerical approximations calculated with $\Delta = 2^{-15}, 2^{-14}, 2^{-13}$ and the expectations approximated by the Monte Carlo simulation with $5000$ different Brownian paths and compensated Poisson process, Figure \ref{fig:strongorder} shows that the root mean square error line and the reference line appear to parallel to each other for different $\theta$, indicating that the mean square convergence order of the stochatic $\theta$ method is $1/2$. Besides, we test the weak convergence order of the Euler--Maruyama method with stepsizes $\Delta = 2^{-10}, 2^{-9}, 2^{-8}$ and $10000$ different Brownian paths and compensated Poisson process. Figure \ref{fig:weakorder} gives the desired result and further supports Theorem \ref{th:weakorder}.

\begin{figure}[!htbp]
	\begin{center}
		\subfigure[$\alpha = 0.45,\theta = 0$]
		{\includegraphics[width=0.45\textwidth]
			{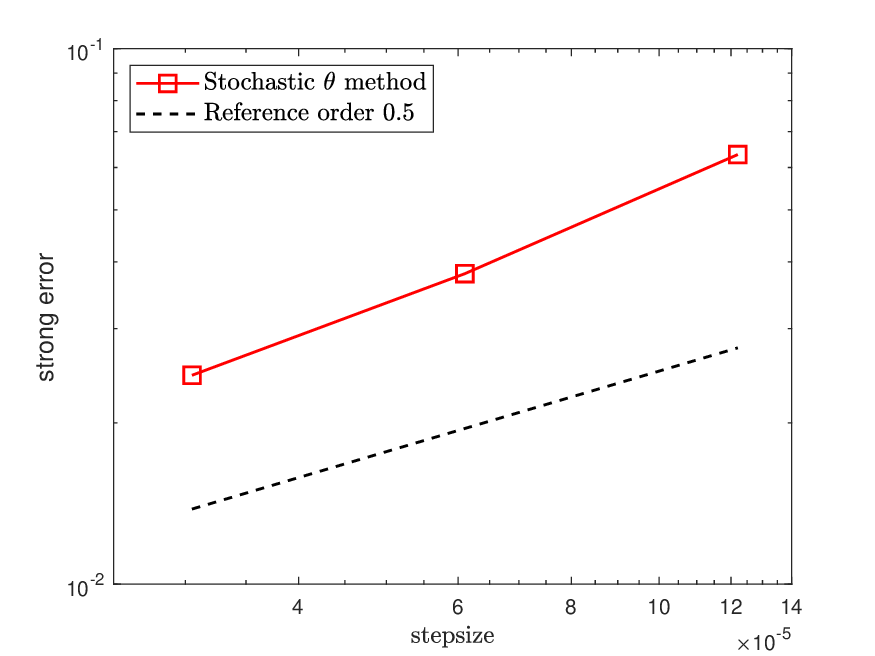}}
		\subfigure[$\alpha = 0.90,\theta = 0$]
		{\includegraphics[width=0.45\textwidth]
			{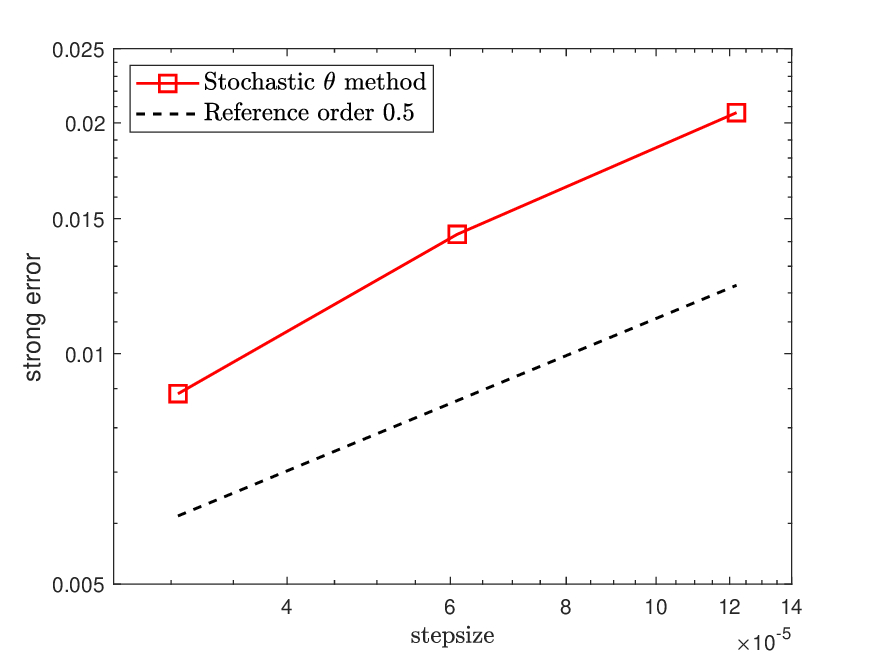}}
		\\
		\subfigure[$\alpha = 0.45,\theta = 0.5$]
		{\includegraphics[width=0.45\textwidth]
			{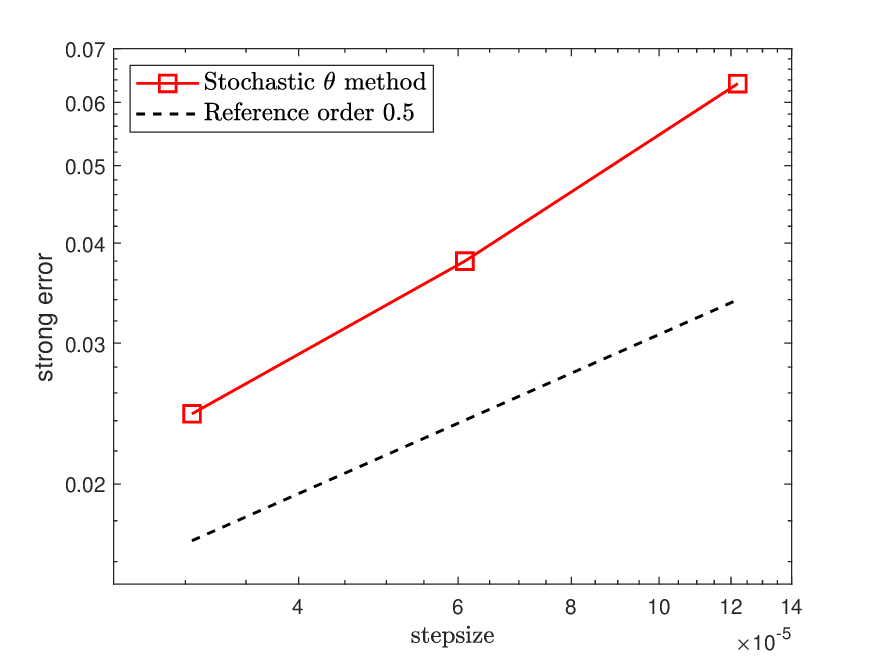}}
		\subfigure[$\alpha = 0.90,\theta = 0.5$]
		{\includegraphics[width=0.45\textwidth]
			{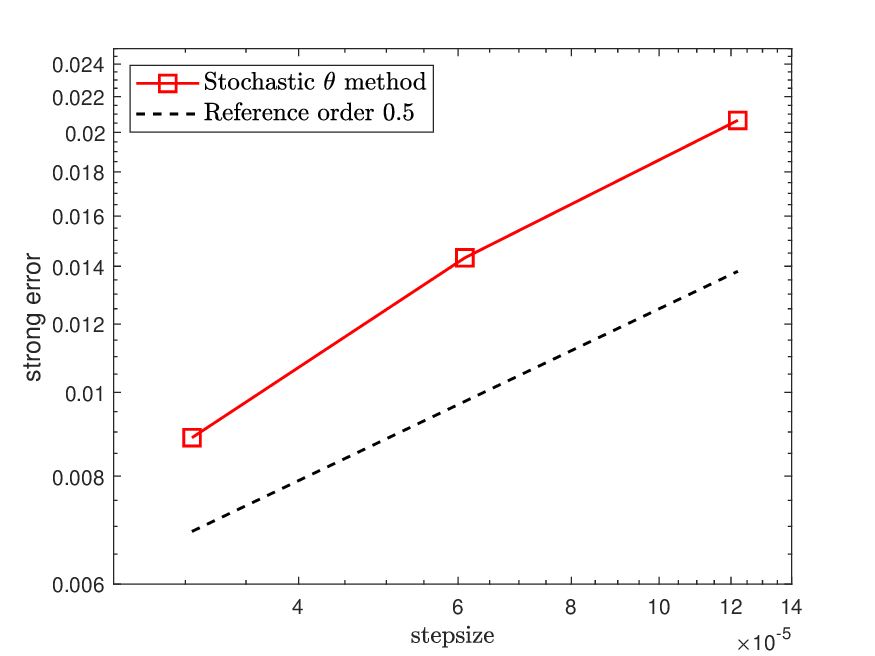}}
		\\
		\subfigure[$\alpha = 0.45,\theta = 1.0$]
		{\includegraphics[width=0.45\textwidth]
			{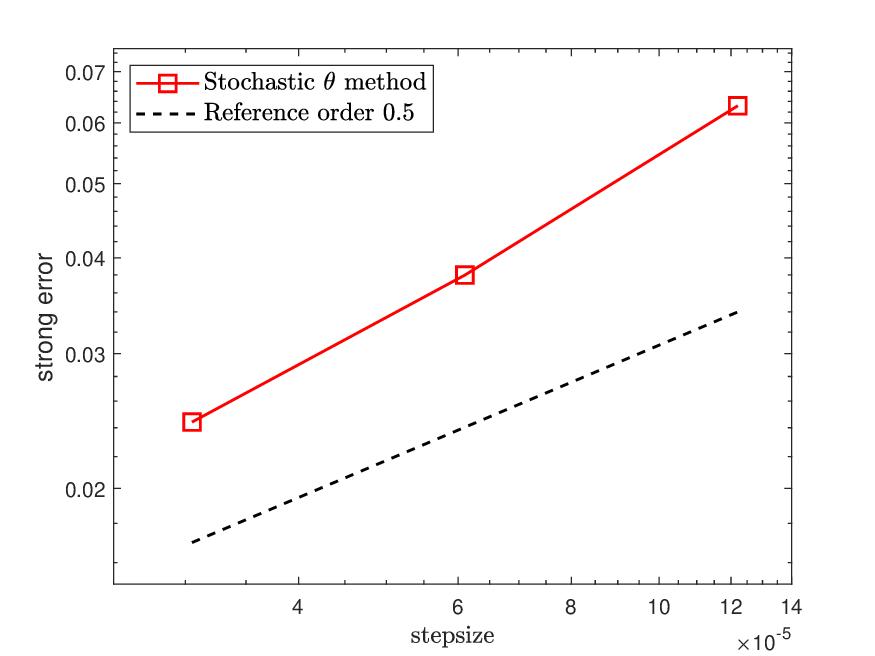}}
		\subfigure[$\alpha = 0.90,\theta = 1.0$]
		{\includegraphics[width=0.45\textwidth]
			{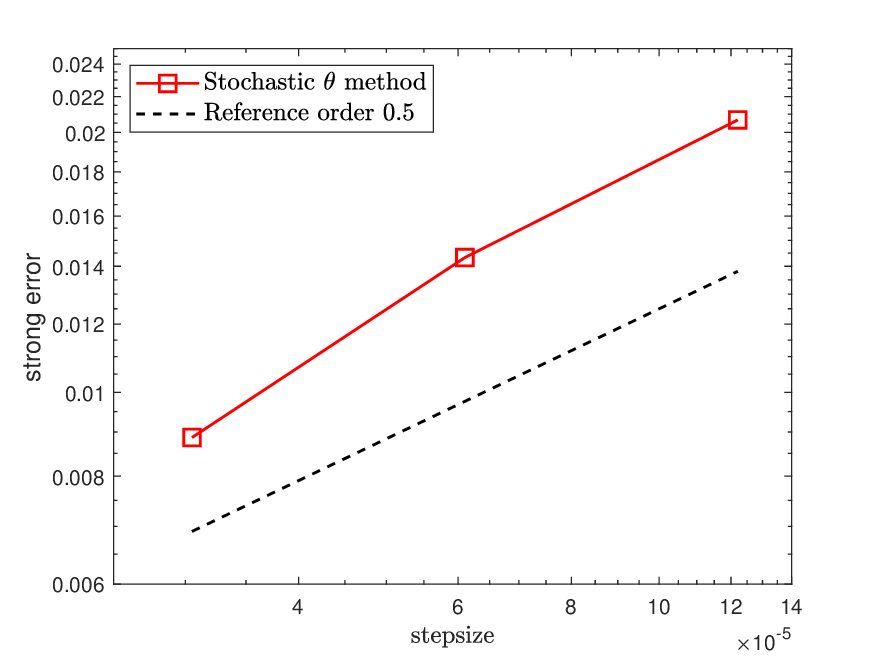}}
		
		\centering\caption{Strong convergence 
        order of stochastic $\theta$ method}
		\label{fig:strongorder}
	\end{center}
\end{figure}

\begin{figure}[!htbp]
\begin{center}
      \subfigure[$\alpha = 0.45,\theta = 0$]
      {\includegraphics[width=0.45\textwidth]
      {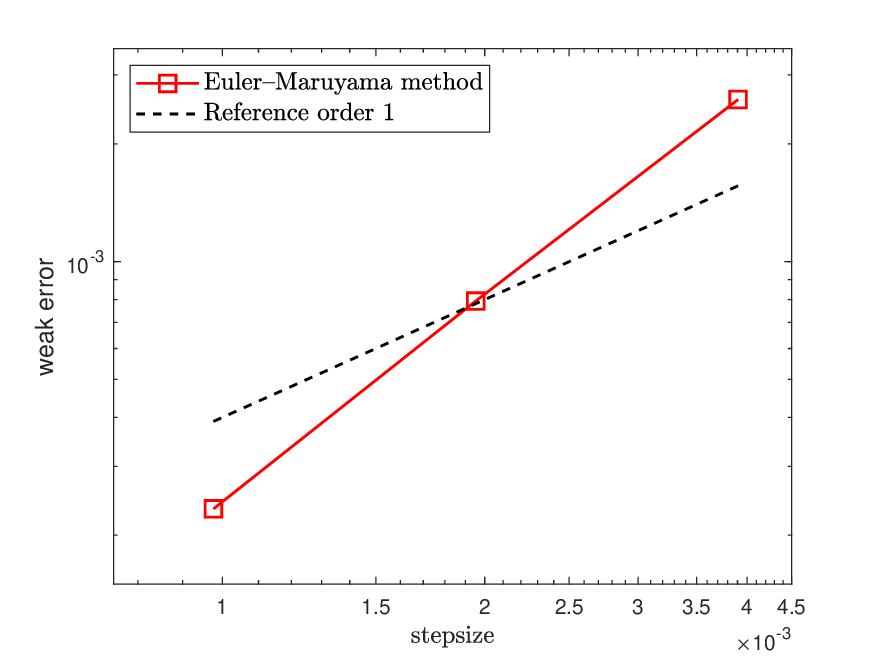}}
      \subfigure[$\alpha = 0.90,\theta = 0$]
      {\includegraphics[width=0.45\textwidth]
      {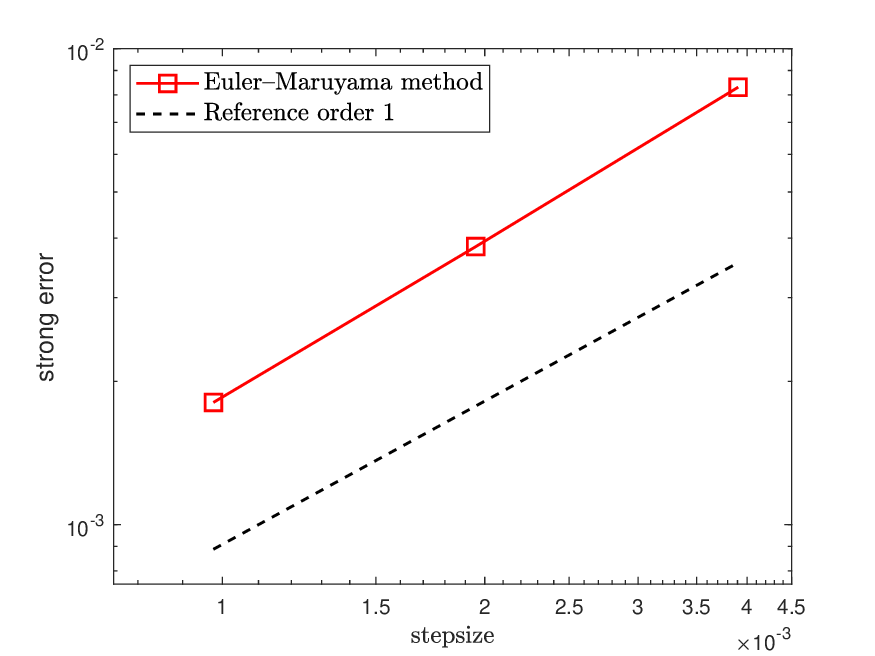}}
      \caption{Weak convergence 
      order of Euler--Maruyama method}
      \label{fig:weakorder}
\end{center}
\end{figure}

\section{Conclusion}			
The strong and weak convergence orders of numerical methods for SDEs driven by time-changed L\'{e}vy noise is investigated in this work. Under the globally Lipschitz conditions, we derive that the stochastic $\theta$ method with $\theta \in [0,1]$ converges strongly with order $1/2$, and the Euler--Maruyama method converges weakly with order $1$.  These theoretical results are finally validated by some numerical experiments. Concerning most SDEs driven by L\'{e}vy noise violating the globally Lipschitz conditions (see \cite{chen2019meansquare, chen2020convergence, dareiotis2016tamed, platen2010numerical} ), we are working on the strong approximations of explicit and implicit numerical methods for SDEs driven by time-changed L\'{e}vy noise with non-globally Lipschitz continuous coefficients. An interesting idea for our future work is to consider the large deviations principles of time-changed SDEs and that of its numerical approximations; see, e.g., \cite{chen2024convergence, hong2024convergence, pacchiarotti2020some} and references therein for more details.

\bibliographystyle{plain}

\begin{thebibliography}{10}

\bibitem{applebaum2009levy}
David Applebaum.
\newblock {\em L\'evy {P}rocesses and {S}tochastic {C}alculus}, volume 116 of
  {\em Cambridge Studies in Advanced Mathematics}.
\newblock Cambridge University Press, Cambridge, second edition, 2009.

\bibitem{chen2025strong}
Jingwei Chen, Jun Ye, Jinwen Chen, and Zhidong Wang.
\newblock Strong convergence and stability of stochastic theta method for
  time-changed stochastic differential equations with local {L}ipschitz
  coefficients.
\newblock {\em arXiv:2503.21653}, 2025.

\bibitem{chen2020convergence}
Ziheng Chen and Siqing Gan.
\newblock Convergence and stability of the backward {E}uler method for
  jump-diffusion {SDE}s with super-linearly growing diffusion and jump
  coefficients.
\newblock {\em J. Comput. Appl. Math.}, 363:350--369, 2020.

\bibitem{chen2019meansquare}
Ziheng Chen, Siqing Gan, and Xiaojie Wang.
\newblock Mean-square approximations of {L}\'evy noise driven {SDE}s with
  super-linearly growing diffusion and jump coefficients.
\newblock {\em Discrete Contin. Dyn. Syst. Ser. B}, 24(8):4513--4545, 2019.

\bibitem{chen2024convergence}
Ziheng Chen, Daoyan Wang, and Lin Chen.
\newblock Convergence order of one point large deviations rate functions for
  backward {E}uler method of stochastic delay differential equations with small
  noise.
\newblock {\em Appl. Numer. Math.}, 202:67--88, 2024.

\bibitem{cont2004financial}
Rama Cont and Peter Tankov.
\newblock {\em Financial modelling with jump processes}.
\newblock Chapman \& Hall/CRC Financial Mathematics Series. Chapman \&
  Hall/CRC, Boca Raton, FL, 2004.

\bibitem{dareiotis2016tamed}
Konstantinos Dareiotis, Chaman Kumar, and Sotirios Sabanis.
\newblock On tamed {E}uler approximations of {SDE}s driven by {L}\'evy noise
  with applications to delay equations.
\newblock {\em SIAM J. Numer. Anal.}, 54(3):1840--1872, 2016.

\bibitem{deng2020semiimplicit}
Chang-Song Deng and Wei Liu.
\newblock Semi-implicit {E}uler-{M}aruyama method for non-linear time-changed
  stochastic differential equations.
\newblock {\em BIT}, 60(4):1133--1151, 2020.

\bibitem{dinunno2024stochastic}
Giulia di~Nunno and Michele Giordano.
\newblock Stochastic {V}olterra equations with time-changed {L}\'evy noise and
  maximum principles.
\newblock {\em Ann. Oper. Res.}, 336(1-2):1265--1287, 2024.

\bibitem{dragomir2003some}
Sever~Silvestru Dragomir.
\newblock {\em Some {G}ronwall type inequalities and applications}.
\newblock Nova Science Publishers, Inc., Hauppauge, NY, 2003.

\bibitem{hong2024convergence}
Jialin Hong, Diancong Jin, and Derui Sheng.
\newblock Convergence analysis for minimum action methods coupled with a finite
  difference method.
\newblock {\em IMA Journal of Numerical Analysis}, page drae038, 2024.

\bibitem{jum2016strong}
Ernest Jum and Kei Kobayashi.
\newblock A strong and weak approximation scheme for stochastic differential
  equations driven by a time-changed {B}rownian motion.
\newblock {\em Probab. Math. Statist.}, 36(2):201--220, 2016.

\bibitem{kobayashi2011stochastic}
Kei Kobayashi.
\newblock Stochastic calculus for a time-changed semimartingale and the
  associated stochastic differential equations.
\newblock {\em J. Theoret. Probab.}, 24(3):789--820, 2011.

\bibitem{li2021truncated}
Xiaotong Li, Wei Liu, and Tianjiao Tang.
\newblock Truncated {E}uler-maruyama method for time-changed stochastic
  differential equations with super-linear state variables and {H}\"{o}lder's
  continuous time variables.
\newblock {\em arXiv:2110.02819}, 2021.

\bibitem{liu2020truncated}
Wei Liu, Xuerong Mao, Jingwen Tang, and Yue Wu.
\newblock Truncated {E}uler-{M}aruyama method for classical and time-changed
  non-autonomous stochastic differential equations.
\newblock {\em Appl. Numer. Math.}, 153:66--81, 2020.

\bibitem{liu2023milstein}
Wei Liu, Ruoxue Wu, and Ruchun Zuo.
\newblock A {M}ilstein-type method for highly non-linear non-autonomous
  time-changed stochastic differential equations.
\newblock {\em arXiv:2308.13999}, 2023.

\bibitem{magdziarz2009stochastic}
Marcin Magdziarz.
\newblock Stochastic representation of subdiffusion processes with
  time-dependent drift.
\newblock {\em Stochastic Process. Appl.}, 119(10):3238--3252, 2009.

\bibitem{mao2008stochastic}
Xuerong Mao.
\newblock {\em Stochastic differential equations and applications}.
\newblock Horwood Publishing Limited, Chichester, second edition, 2008.

\bibitem{mao2013strong}
Xuerong Mao and Lukasz Szpruch.
\newblock Strong convergence and stability of implicit numerical methods for
  stochastic differential equations with non-globally {L}ipschitz continuous
  coefficients.
\newblock {\em J. Comput. Appl. Math.}, 238:14--28, 2013.

\bibitem{mccauley2013stochastic}
Joseph~L. McCauley.
\newblock {\em Stochastic calculus and differential equations for physics and
  finance}.
\newblock Cambridge University Press, Cambridge, 2013.

\bibitem{meng2024ultimate}
Qingyan Meng, Yejuan Wang, Peter~E. Kloeden, and Yinan Ni.
\newblock The ultimate boundedness of solutions for stochastic differential
  equations driven by time-changed {L}\'evy noises.
\newblock {\em Appl. Math. Lett.}, 157:Paper No. 109186, 6, 2024.

\bibitem{nane2016stochastic}
Erkan Nane and Yinan Ni.
\newblock Stochastic solution of fractional {F}okker-{P}lanck equations with
  space-time-dependent coefficients.
\newblock {\em J. Math. Anal. Appl.}, 442(1):103--116, 2016.

\bibitem{nane2017stability}
Erkan Nane and Yinan Ni.
\newblock Stability of the solution of stochastic differential equation driven
  by time-changed {L}\'evy noise.
\newblock {\em Proc. Amer. Math. Soc.}, 145(7):3085--3104, 2017.

\bibitem{nane2018path}
Erkan Nane and Yinan Ni.
\newblock Path stability of stochastic differential equations driven by
  time-changed {L}\'evy noises.
\newblock {\em ALEA Lat. Am. J. Probab. Math. Stat.}, 15(1):479--507, 2018.

\bibitem{nane2021timechanged}
Erkan Nane and Yinan Ni.
\newblock A time-changed stochastic control problem and its maximum principle.
\newblock {\em Probab. Math. Statist.}, 41(2):193--215, 2021.

\bibitem{ni2020study}
Yinan Ni.
\newblock {\em Study of stochastic differential equation driven by time-changed
  {L}\'evy noise}.
\newblock ProQuest LLC, Ann Arbor, MI, 2020.
\newblock Thesis (Ph.D.)--Auburn University.

\bibitem{pacchiarotti2020some}
Barbara Pacchiarotti.
\newblock Some large deviations principles for time-changed {G}aussian
  processes.
\newblock {\em Lith. Math. J.}, 60(4):513--529, 2020.

\bibitem{platen2010numerical}
Eckhard Platen and Nicola Bruti-Liberati.
\newblock {\em Numerical solution of stochastic differential equations with
  jumps in finance}, volume~64 of {\em Stochastic Modelling and Applied
  Probability}.
\newblock Springer-Verlag, Berlin, 2010.

\bibitem{stuart1996dynamical}
A.~M. Stuart and A.~R. Humphries.
\newblock {\em Dynamical systems and numerical analysis}, volume~2 of {\em
  Cambridge Monographs on Applied and Computational Mathematics}.
\newblock Cambridge University Press, Cambridge, 1996.

\bibitem{wang2016numerical}
Liguo Wang.
\newblock Numerical solutions of stochastic differential equations.
\newblock 2016.

\end{thebibliography}

\end{document}